\def\pd#1#2{\frac{\partial#1}{\partial#2}}
\def\Projan{\mathop{\rm Projan}}
\newtheorem{theorem}{Theorem}[section]
\newtheorem{exa}[theorem]{Example}
\newtheorem{rem}[theorem]{Remark}
\newtheorem{cor}[theorem]{Corollary}
\newtheorem{tont}[theorem]{Definition}
\newenvironment{definition}{\begin{tont} \em}{\end{tont}}
\newtheorem{lem}[theorem]{Lemma}
\newenvironment{Lemme}{\begin{lem} \em}{\end{lem}}
\newtheorem{Prop}[theorem]{Proposition}
\newcommand{\cO}{{\mathcal O}}
\newcommand{\cR}{{\mathcal R}}
\begin{document}

\title [Infinitesimal Lipschitz condition ]{The genericity of the infinitesimal Lipschitz condition for hypersurfaces}
\author{TERENCE GAFFNEY}

\maketitle

\selectlanguage{english}

\begin{abstract}
We continue the development of the theory of infinitesimal Lipschitz equivalence, showing the genericity of the condition for families of hypersurfaces with isolated singularities.
\end{abstract}

\section{Introduction} In an earlier paper \cite{GL1}, we introduced a candidate for a theory of infinitesimal Lipschitz equisingularity for families of complex analytic hypersurfaces with isolated singularities. The definition given there has an equivalent formulation, using the theory of integral closure of modules. This alternate form is easier to work with in many situations. In this paper we show that a slightly evolved version of this condition is  {\it generic}. More precisely, we show, in the case of two strata, considered here, that the condition holds on a Zariski open subset of the parameter stratum $Y$. Proving that a stratification property is generic is essential for an equisingularity condition to have any value.

In preparation for using the integral closure formulation of our condition, we review some elements  of the theory of integral closure of modules in section 2.

In section 3, we review the definition of the Lipschitz saturation of an ideal, give its alternate formulation using the theory of integral closure and define two infinitesimal Lipschitz conditions, one which we denote by $iL_{m_Y}$ which is the analogue of the Whitney conditions and one which is the analogue of the Whitney A or the a$_f $ condition which we denote by $iL_A$.  We also give a geometric interpretation of these conditions on the family $X$.

We also introduce an invariant coming from the integral formulation of the Lipschitz condition. We use this invariant to show when two different ideals have the same Lipschitz saturation. We also use it to characterize generic hyperplanes in section 4.

In section 4, we come to the heart of this paper. As mentioned earlier, proving a genericity theorem is an important step in developing the theory attached to an equisingularity condition. Not only is this result necessary to ensure the condition is widely applicable, but the fact of genericity implies a strong connection with the geometry of the family. For example, Teissier proved that condition C held on a Zariski open and dense subset of the parameter space $Y^k$, of a $k$ parameter family of isolated hypersurface singularities in ${\Bbb C}^{n+k}$ in \cite{T1}. Condition C later was seen to be equivalent to Verdier's condition W$_f$ for the pair of strata $\{{\Bbb C}^{n+k}-Y^k, Y^k\}$, where $f$ defined the family. Condition C was the keystone of Teissier's work on the Whitney equisingularity of families of hypersurfaces with isolated singularities. We use Teissier's proof in \cite{T1} as  a model in developing a similar theorem for the $iL_A$ condition. Currently a proof for the genericity of the $iL_{m_Y}$ remains unknown.

In section 4, we state and prove the genericity theorem for the $iL_A$ condition for the case of families of isolated hypersurface singularities.  For the proof, we work in the module setting.  Analogous results exist in the general case for families of isolated singularities, but requires further work in developing the definition of the infinitesimal Lipschitz condition; since you start with modules in the general case instead of ideals, a further layer of complexity is added in passing to the module theoretic version of the definition. 

Also in section 4, we give an application of the genericity theorem. Given an equisingularity condition it is natural to ask if it passes to the family of generic plane sections of the singularity. We use the genericity theorem to show that it does for the $iL_A$ condition. We then use the invariant introduced in section 3, and the multiplicity polar theorem, discussed in section 2, to give a condition for a hyperplane to be generic.

Ultimately, we hope to use the stratification condition defined here to prove that for a family of isolated hypersurface singularities,  the $iL_A$ condition gives a necessary and sufficient condition for the family to have a bi-Lipschitz stratification which includes $Y$ as a stratum. This would give an infinitesimal criterion for the existence of a bi-Lipschitz stratification of such a family. It is known by work of Mostowski, \cite{M} that bi-Lipschitz stratifications exist in the complex analytic setting, but not much is known about them besides their existence.

Using the conditions of this paper to characterize the ``thick" and ``thin" zones of Birbrair, Neumann and Pichon \cite{BNP}, developed by them for normal surface singularities, would open an avenue to generalizing these notions to higher dimensions, as well as linking them with Mostowski's work on showing the existence of these stratifications. 

I am happy to acknowledge the impetus to this work given by the beautiful paper of Birbrair,  Neumann and  Pichon \cite {BNP} and the stimulation afforded from conversation with them.

\vspace{1cm}

\section{The theory of the Integral closure of modules}

Let  $(X, x)$ be a germ of a complex analytic space and $X$ a
small representative of the germ and let $\mathcal{O}_{X}$ denote the
structure sheaf on a complex analytic space $X$. One of the formulations of the definition of the infinitesimal Lipschitz condition uses the theory of
integral closure of modules, which we now review. This theory will also provide the tools for working with the condition.

\begin{definition} Suppose $(X, x)$ is the germ of a complex analytic space,
$M$ a submodule of $\mathcal{O}_{X,x}^{p}$. Then $h \in
\mathcal{O}_{X,x}^{p}$ is in the integral closure of $M$, denoted
$\overline{M}$, if for all analytic $\phi : (\mathbb{C}, 0) \to (X,
x)$, $h \circ \phi \in (\phi^{*}M)\mathcal{O}_{1}$. If $M$ is a
submodule of $N$ and $\overline{M} = \overline{N}$ we say that $M$
is a reduction of $N$.
\end{definition}

To check the definition it suffices to check along a finite number of curves whose generic point is in the Zariski open subset of $X$ along which $M$ has maximal rank. (Cf. \cite {G-2}.)

If a module $M$ has finite colength in $\mathcal{O}_{X,x}^{p}$, it
is possible to attach a number to the module, its Buchsbaum-Rim
multiplicity,  $e(M,\mathcal{O}_{X,x}^{p}).$ We can also define the multiplicity $e(M,N)$ of a pair of
modules $M \subset N$, $M$ of finite colength in $N$, as well, even
if $N$ does not have finite colength in $\mathcal{O}_{X}^{p}$.

We recall how to construct the multiplicity of a pair of modules using the approach of
Kleiman and Thorup \cite{KT}. Given a submodule $M$ of a free
$\mathcal{O}_{X^{d}}$ module $F$ of rank $p$, we can associate a
subalgebra $\mathcal{R}(M)$ of the symmetric $\mathcal{O}_{X^{d}}$
algebra on $p$ generators. This is known as the Rees algebra of $M$.
If $(m_{1}, \cdots ,m_{p})$ is an element of $M$ then $\sum
m_{i}T_{i}$ is the corresponding element of $\mathcal{R}(M)$. Then
$\Projan(\mathcal{R}(M))$, the projective analytic spectrum of
$\mathcal{R}(M)$ is the closure of the projectivised row spaces of
$M$ at points where the rank of a matrix of generators of $M$ is
maximal. Denote the projection to $X^{d}$ by $c$. If $M$ is a
submodule of $N$ or $h$ is a section of $N$, then $h$ and $M$
generate ideals on $\Projan \mathcal{R}(N)$; denote them by $\rho(h)$
and $\rho(\mathcal{M})$. If we can express $h$ in terms of a set of
generators $\{n_{i}\}$ of $N$ as $\sum g_{i}n_{i},$ then in the
chart in which $T_{1}\neq 0,$ we can express a generator of
$\rho(h)$ by $\sum g_{i}T_{i}/T_{1}.$ Having defined the ideal sheaf
$\rho(\mathcal{M}),$ we blow it up.

On the blow up $B_{\rho(\mathcal{M})}(\Projan \mathcal{R}(N))$ we have
two tautological bundles. One is the pullback of the bundle on
$\Projan \mathcal{R}(N)$. The other comes from $\Projan
\mathcal{R}(M)$. Denote the corresponding Chern classes by $c_{M}$
and $c_{N}$, and denote the exceptional divisor by $D_{M,N}$.
Suppose the generic rank of $N$ (and hence of $M$) is $g$.

Then the multiplicity of a pair of modules $M, N$ is:

$$
e(M,N) = \sum_{j=0}^{d+g-2}\int D_{M,N}\cdot c_{M}^{d+g-2-j}\cdot
c_{N}^{j}.
$$

Kleiman and Thorup show that this multiplicity is well defined at $x
\in X$ as long as $\overline{M} = \overline{N}$ on a deleted
neighborhood of $x$. This condition implies that $D_{M,N}$ lies in
the fiber over $x$, hence is compact. Notice that when $N=F$ and $M$ has finite colength in $F$ then $e(M,N)$ is the Buchsbaum-Rim multiplicity $e(M,\mathcal{O}_{X,x}^{p})$. There is a fundamental result due to Kleiman and Thorup, the principle of additivity \cite{KT}, which states that given a sequence of $\mathcal{O}_{X,x}$-modules $M\subset N \subset P$  such that the multiplicity of the pairs is well defined, then$$e(M,P)=e(M,N)+e(N,P).$$Also if $\overline{M}=\overline{N}$ then $e(M,N)=0$ and the converse also holds if $X$ is equidimensional. Combining these two results we get thet if $\overline{M}=\overline{N}$ then $e(M,N)=e(N,P).$ These results will be used in Section 5.

In studying the geometry of singular spaces, it is natural to study
pairs of modules. In dealing with non-isolated singularities, the
modules that describe the geometry have non-finite colength, so
their multiplicity is not defined. Instead, it is possible to define
a decreasing sequence of modules, each with finite colength inside
its predecessor, when restricted to a suitable complementary plane.
Each pair controls the geometry in a particular codimension.

We also need the notion of the polar varieties of $M$. The {\it polar variety of codimension $k$} of $M$ in $X$, denoted
$\Gamma_k(M)$, is constructed by intersecting $\Projan{\mathcal R}(M)$
 with $X\times H_{g+k-1}$ where
$H_{g+k-1}$ is a general plane of codimension $g+k-1$, then projecting to
$X$.

Setup: We suppose we have families  of modules $M\subset  N$, $M$ and $N$
 submodules of a free module $F$ of rank $p$
 on an equidimensional family of spaces with equidimensional
 fibers ${\mathcal X}^{d+k}$, ${\mathcal X}$ a family over a smooth base
$Y^k$. We assume that the generic rank of $M$, $N$ is $g \le p$.  Let
$P(M)$ denote $\Projan {\mathcal R}(M)$, $\pi_M$
 the projection to ${\mathcal X}$.

We will be interested in computing, as we move from the special point $0$ to a generic point, the change in the multiplicity of
the pair $(M,N),$ denoted $\Delta(e(M,N))$. We will assume that the integral closures of $M$ and $N$ agree off a set $C$ of dimension
$k$ which is finite over $Y$, and assume we are working on a
sufficiently small neighborhood of the origin, so that every component
of $C$ contains the origin in its closure. Then $e(M,N, y)$ is the
sum of the multiplicities of the pair at all points in the fiber of
$C$ over $y$, and $\Delta(e(M,N))$ is the change in this number from
$0$ to a generic value of $y.$ If we have a set $S$ which is finite
over $Y$, then we can project $S$ to $Y$, and the degree of the
branched cover at $0$ is $mult_{y} S.$ (Of course, this is just the
number of points in the fiber of $S$ over our generic $y.$)

Let $C(M)$ denote the locus of points where $M$ is not free, {\it i.e.}, the
points where the rank of $M$ is less
than $g$, $C(\Projan {\mathcal R}(M))$
its inverse image under $\pi_M$.

We can now state the Multiplicity Polar Theorem. The proof in the ideal case appears in \cite{Gaff1}; the general proof appears in \cite{Gaff}.

\begin{theorem}(Multiplicity Polar Theorem) Suppose in the above
setup we have that $\overline{M} = \overline{N}$ off a set $C$ of
dimension $k$ which is finite over $Y$. Suppose further that
$C(Projan\mathcal{R}(M))(0) = C(Projan\mathcal{R}(M(0)))$ except
possibly at the points which project to $0 \in \mathcal{X}(0).$
Then, for y a generic point of $Y$, $$\Delta(e(M,N)) =
mult_{y}\Gamma_{d}(M) - mult_{y}\Gamma_{d}(N)$$
where ${\mathcal X}(0)$ is the fiber over $0$ of the family ${\mathcal X}^{d+k}$, $C(Projan\mathcal{R}(M))(0)$ is the fiber of $C(Projan\mathcal{R}(M))$ over $0$ and $M(0)$ is the restriction of the module $M$ to  ${\mathcal X}(0)$. \end{theorem}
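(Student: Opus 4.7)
The approach I would take is to use the Kleiman--Thorup intersection-theoretic presentation of $e(M,N)$ as a sum over $B := B_{\rho(\mathcal{M})}(\Projan\mathcal{R}(N))$ and to specialize in the family $\mathcal{X}\to Y$, identifying the discrepancy between the special and generic fibers with contributions coming from the polar varieties.

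First, I would construct a relative version of the Kleiman--Thorup setup over the full base $Y$, with exceptional divisor $D = D_{M,N}$ and tautological classes $c_M, c_N$ on $B$. Because $\overline{M} = \overline{N}$ off $C$ and $C$ is finite over $Y$, the divisor $D$ is proper over $Y$, so fiberwise integrations make sense. The hypothesis on $C(\Projan\mathcal{R}(M))(0)$ is precisely what is required to guarantee that the restriction of $B$ to the special fiber $\mathcal{X}(0)$ is, away from the fiber over $0$, the analogous blowup built from $M(0), N(0)$; without this assumption, new components of the non-free locus could appear under specialization and corrupt the comparison.

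Second, I would exploit the Cartier relation $c_N = c_M + D$ on $B$ to collapse the Kleiman--Thorup sum:
$$e(M,N) \;=\; \sum_{j=0}^{d+g-2} D\cdot c_M^{d+g-2-j}\, c_N^j \;=\; \int_{B} c_N^{d+g-1} \;-\; \int_{B} c_M^{d+g-1}.$$
Each term on the right has a polar interpretation: intersecting $\Projan\mathcal{R}(M)$ with a general plane of codimension $g+d-1$ and projecting to $\mathcal{X}$ cuts out $\Gamma_d(M)$, and for a generic $y \in Y$ the restriction of $\int c_M^{d+g-1}$ to the fiber counts exactly $\mathrm{mult}_y\,\Gamma_d(M)$, the degree of this branched cover over $Y$ at $y$; similarly for $N$. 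A conservation-of-number argument then shows that as we deform from $0$ to generic $y$, the change in $\int c_M^{d+g-1}$ equals the number of intersection points that escape the fiber over the origin into the generic fiber along $\Gamma_d(M)$, and by the hypothesis no spurious components intrude; the same holds for $N$. Subtracting the two changes gives $\Delta(e(M,N)) = \mathrm{mult}_y\,\Gamma_d(M) - \mathrm{mult}_y\,\Gamma_d(N)$.

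The main obstacle is the first step: justifying that the relative Kleiman--Thorup data behave well under fiberwise restriction, and that the hypothesis on $C(\Projan\mathcal{R}(M))(0)$ is strong enough to exclude new components of the non-free locus that could appear in the limit. This is a delicate, non-generic technical condition, and the proof must invoke it in exactly this place. Once this flatness/specialization issue is handled, the remaining steps are essentially the bookkeeping of specializing proper intersection numbers on $B$ across fibers of $\mathcal{X}\to Y$ and identifying the fiberwise polar contributions by a general-plane argument on the blowup.
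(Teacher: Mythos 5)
First, a point of reference: the paper does not actually prove this theorem --- it is stated with the remark that the proof in the ideal case appears in [Gaff1] and the general proof in [Gaff] --- so there is no in-paper argument to compare against. Your sketch does reproduce the architecture of Gaffney's proof in those references: the Kleiman--Thorup presentation of $e(M,N)$ on $B_{\rho(M)}(\Projan\mathcal{R}(N))$, the divisor relation between $c_M$, $c_N$ and $D_{M,N}$, the telescoping of the sum, conservation of number over $Y$, and the polar varieties entering as correction terms. So the approach is the right one, but two of your steps are assertions precisely where the content of the theorem lies.

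(i) The identity $e(M,N)=\int_B c_N^{d+g-1}-\int_B c_M^{d+g-1}$ is only formal: $B$ is not compact and neither class is separately supported on a set proper over a point, so neither integral is a well-defined number. Only the difference, which equals $D_{M,N}\cdot\sum_j c_M^{d+g-2-j}c_N^{j}$ and is supported over $C$ (finite over $Y$), has well-defined fiberwise degrees. To extract numbers from $c_M^{d+g-1}$ one must replace it by the cycle cut out by a general plane of codimension $g+d-1$, i.e.\ by the inverse image of $\Gamma_d(M)$, which is $k$-dimensional and finite over $Y$; the polar varieties are not an ``interpretation'' of independently defined integrals but the only way to give them meaning, and one must check that this representative specializes correctly. (ii) More seriously, conservation of number controls the degree over $Y$ of the \emph{relative} cycle $D_{M,N}\cdot\sum c_M^{i}c_N^{j}$; its fiber over a generic $y$ does compute $e(M(y),N(y))$, but its fiber over $0$ is \emph{not} $e(M(0),N(0))$, because the special fiber of the relative blowup need not coincide with the blowup built intrinsically from $M(0)\subset N(0)$ on $\mathcal{X}(0)$: there may be components of $\Projan\mathcal{R}(M)$ and of $D_{M,N}$ lying over the special fiber that do not arise from $M(0)$. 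The theorem is exactly the statement that this discrepancy equals $\mathrm{mult}_y\Gamma_d(M)-\mathrm{mult}_y\Gamma_d(N)$, and proving it requires a dimension argument on the fibers of $\Projan\mathcal{R}(M)$ over $\mathcal{X}(0)$; this is where the hypothesis $C(\Projan\mathcal{R}(M))(0)=C(\Projan\mathcal{R}(M(0)))$ away from $0$ is actually invoked, to confine the vertical contributions (hence the polar correction) to the point $0$, while a general plane makes the absolute polar of $M(0)$ miss the origin. Your sketch relegates this comparison to ``bookkeeping''; it is the heart of the proof and must be carried out explicitly.
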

\vspace{2cm}
\section{The Lipschitz saturation of an ideal and the definition of the $iL$ conditions}

The construction of the integral closure of an ideal is an example of a general approach to constructing closure operations on sheaves of ideals and modules given a closure operation on a sheaf of rings. Here is the idea. Denote the closure operation on the ring $R$ by $C(R)$. Given a ring, R, blow-up $R$ by an ideal $I$. (If we have a module $M$ which is a submodule of a free module $F$, form the blow-up $B_{\rho(\mathcal{M})}(\Projan \mathcal{R}(F))$, as in the last section.) Use the projection map of the blow-up to the base to pullback $I$ to the blow-up. Now apply the closure operation to the structure sheaf of the blow-up, and look at the sheaf of ideals generated by the pull back of $I$. The elements of the structure sheaf on the base which pull back to elements of the ideal sheaf are the elements of $C(I)$.

 Two examples of this are given by the normalization of a ring and the semi-normalization of a ring. (In the normalization, all of the bounded meromorphic functions become regular, while in the semi-normalization only those which are continuous become regular. Cf \cite{GV} for details on this construction.)  Consider $B_I(X)$, the blow-up of $X$ by $I$. If we pass to the normalization of the blow-up, then $h$ is in $\bar I$ iff and only if the pull back of $h$ to the normalization is in the ideal generated by the pullback of $I$ \cite{LT}. If we pass to the semi-normalization of the blow-up, then $h$ is in the weak sub-integral closure of $I$ denoted ${}^*I$, iff the pullback of $h$ to the semi-normalization is in the ideal generated by the pullback of $I$. (For a proof of this and more details on the weak subintegral closure cf. \cite {GV}).
 
 There is another way to look at the closure operation defined above; in the case of the integral closure of an ideal, we are looking at an open cover of the co-support of an ideal sheaf, and choosing locally bounded meromorphic functions on each open set, and seeing if we can write a regular function locally in terms of generators of the ideal using our locally bounded meromorphic functions as coefficients. This suggests, that in the Lipschitz case, we use locally bounded meromorphic functions which satisfy a Lipschitz condition. The closure operation on rings that this indicates is the Lipschitz saturation of a space, as developed by Pham-Teissier (\cite {PT}). 
 
In the approach of Pham-Teissier, let $A$ be a commutative local ring over ${\Bbb C}$, and $\bar A$ its normalization. (We can assume $A$ is the local ring of an analytic space $X$ at the origin in ${\Bbb C}^n$.) Let $I$ be the kernel of the inclusion
$$\bar A\otimes_{{\Bbb C}}\bar A\to \bar A\otimes_{A}\bar A.$$

In this construction, the tensor product is the analytic tensor product which has the right universal property for the category of analytic algebras, and which gives the analytic algebra for the analytic fiber product. 

Pham and Teissier then defined the Lipschitz saturation of $A$, denoted $\tilde A$, to consist of all elements $h\in \bar A$ such that $h\otimes 1-1\otimes h \in \bar A\otimes_{{\Bbb C}}\bar A$ is in the integral closure of $I$.  (For related results see \cite {L}.)

The connection between this notion and that of Lipschitz functions is as follows. If we pick generators $(z_1,\dots,z_n)$ of the maximal ideal of the local ring $A$, then $z_i\otimes 1-1\otimes z_i \in \bar A\otimes_{{\Bbb C}}\bar A$ give a set of generators of $I$. Choosing $z_i$ so that they are the restriction of coordinates on the ambient space,  the integral closure condition is equivalent to  
$$|h(z_1,\dots,z_n)-h(z'_1,\dots,z'_n)|\le C sup_i|z_i-z'_i|$$
holding on some neighborhood $U$, of $(0,0)$ on $X\times X$. This last inequality is what is meant by the meromorphic function $h$ being Lipschitz at the origin on $X$. (Note that the integral closure condition is equivalent to the inequality holding on a neighborhood $U$ for some $C$  for any set of generators of the maximal ideal of the local ring $A$. The constant $C$ and the neighborhood $U$ will depend on the choice.)

If $X,x$ is normal, then passing to the Lipschitz saturation doesn't add any functions. Denote the saturation of the blow-up by $SB_I(X)$, and the map to $X$ by $\pi_S$. Then we make the definition:

\begin{definition} let $I$ be an ideal in $\cO_{X,x}$, then the {\bf Lipschitz saturation} of the ideal $I$, denoted $I_S$, is the ideal
$I_{S}=\{h\in \cO_{X,x}| \pi^*_S(h)\in \pi^*_S(I)\}$.
\end{definition}

Since the normalization of a local ring $A$ contains the seminormalization of $A$, and the seminomalization contains the Lipschitz saturation of $A$, it follows that 
$\bar I\supset {}^*I\supset I_S\supset I$. In particular, if $I$ is integrally closed, all three sets are the same.

Here is a viewpoint on the Lipschitz saturation of an ideal $I$, which will be useful later. Given an ideal, $I$, and an element $h$ that we want to check for inclusion in $I_S$, we can consider $(B_I(X), \pi)$, $\pi^*(I)$ and $h\circ \pi$. Since $\pi^*(I)$ is locally principal, working at a point $z$ on the exceptional divisor $E$, we have a local generator $f\circ \pi$ of  $\pi^*(I)$. Consider the quotient $(h/f)\circ \pi$. Then $h\in I_S$ if and only if at the generic point of any component of $E$, $(h/f)\circ \pi$ is Lipshitz with respect to a system of local coordinates. If this holds we say $h\circ \pi\in (\pi^*(I))_S$.

We can also work on the normalized blow-up, $(NB_I(X), \pi_N)$. Then we say $h\circ \pi_N\in (\pi_N^*(I))_S$ if $(h/f)\circ \pi_N$  satisfies a Lipschitz condition at the generic point of each component of the exceptional divisor of $(NB_I(X), \pi_N)$ with respect to the pullback to $(NB_I(X), \pi_N)$ of a system of local coordinates on $B_I(X)$ at the corresponding points of $B_I(X)$. As usual, the inequalities at the level of $NB_I(X)$ can be pushed down and are equivalent to inequalities on a suitable collection of open sets on $X$. 

This definition can be given an equivalent statement using the theory of integral closure of modules. Since Lipschitz conditions depend on controlling functions at two different points as the points come together, we should look for a sheaf defined on $X\times X$.  We describe a way of moving from a sheaf of ideals on $X$ to a sheaf on $X\times X$.  Let $h\in \cO_{X,x}$; define $h_D$  in $ \cO^2_{X\times X,{x,x}}$, as $(h\circ \pi_1,h\circ\pi_2)$, $\pi_i$ the projection to the i-th factor of the product. Let $I$ be an ideal in $\cO_{X,x}$;  then $I_D$ is the submodule of $\cO^2_{X\times X,{x,x}}$ generated by the $h_D$ where $h$ is an element of $I$.

If $I$ is an ideal sheaf on a space $X$ then intuitively, $h\in \bar I$ if $h$ tends to zero as fast as the elements of $I$ do as you approach a zero of $I$. If $h_D$ is in ${\overline {I_D}}$ then the element defined by $(1,-1)\cdot (h\circ \pi_1,h\circ\pi_2)=h\circ \pi_1-h\circ\pi_2$ should be in the integral closure of the ideal generated by applying $(1,-1)$ to the generators of $I_D$, namely the ideal generated by $g\circ \pi_1-g\circ\pi_2$, $g$ any element of $I$. This implies the difference of $h$ at two points goes to zero as fast as the difference of elements of $I$ at the two points go to zero as the points approach each other. It is reasonable that elements in $I_S$ should have this property. In fact we have:

 \begin {theorem} Suppose $(X,x)$ is a complex analytic set germ, $I\subset\cO_{X,x}$. Then $h\in I_{S}$ if and only if 
$h_D\in{\overline I_D}$.
\end{theorem}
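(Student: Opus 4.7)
The plan is to test both conditions by analytic curves and to transport them through the normalized blow-up $\pi_N\colon NB_I(X)\to X$. Write $I=(g_1,\dots,g_k)$, so that $I_D$ is generated in $\mathcal{O}^{2}_{X\times X,(x,x)}$ by the sections $g_{j,D}=(g_j\circ\pi_1,g_j\circ\pi_2)$. By the curve criterion for integral closure of modules, $h_D\in\overline{I_D}$ is equivalent to the following: for every analytic germ $\phi=(\phi_1,\phi_2)\colon(\mathbb{C},0)\to(X\times X,(x,x))$, there is a \emph{common} $k$-tuple $\alpha_1,\dots,\alpha_k\in\mathcal{O}_1$ with $h\circ\phi_i=\sum_j\alpha_j(g_j\circ\phi_i)$ for $i=1,2$. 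On the other side, using the blow-up viewpoint recalled just before the statement, $h\in I_S$ amounts to saying that at the generic point of each component of the exceptional divisor $E\subset NB_I(X)$ the ratio $(h/f)\circ\pi_N$, for a local generator $f$ of $\pi_N^{*}I$, satisfies a Lipschitz estimate in the pulled-back ambient coordinates.

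For the implication $h\in I_S\Rightarrow h_D\in\overline{I_D}$, let $\phi=(\phi_1,\phi_2)$ be a test curve; after a finite cover of the source, each $\phi_i$ lifts to $\tilde\phi_i\colon(\mathbb{C},0)\to NB_I(X)$ landing at some $z_i\in E$. Choose local generators $f_i$ of $\pi_N^{*}I$ near $z_i$ and write $g_j\circ\pi_N=\beta_{ji}f_i$, so that $g_j\circ\phi_i=(\beta_{ji}\circ\tilde\phi_i)(f_i\circ\tilde\phi_i)$. The Lipschitz hypothesis gives $\gamma_i:=(h/f_i)\circ\tilde\phi_i\in\mathcal{O}_1$, and the required conclusion reduces to solving the $2\times k$ linear system $\sum_j\alpha_j(\beta_{ji}\circ\tilde\phi_i)=\gamma_i$, $i=1,2$, for a common tuple $\alpha=(\alpha_j)\in\mathcal{O}_1^{k}$. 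When $\phi_1=\phi_2$ (so $z_1=z_2$) the two rows coincide and $h\in I_S\subset\bar{I}$ already yields the required $\alpha$. In the substantive case $z_1\ne z_2$ I would apply Pham--Teissier's Lipschitz inequality on $B_I(X)$, pulled back along $(\tilde\phi_1,\tilde\phi_2)$ in the fibre product, to bound the obstruction to the consistency of $\gamma_1$ and $\gamma_2$ by the coordinate differences $z_\ell\circ\tilde\phi_1-z_\ell\circ\tilde\phi_2$, and then assemble a common $\alpha$ by combining the local solutions attached to $z_1$ and $z_2$.

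For the converse direction, assume $h_D\in\overline{I_D}$ and fix the generic point $z$ of a component of $E$. To verify the Lipschitz estimate for $(h/f)\circ\pi_N$ at $z$ it suffices, by the curve criterion for the ideal defining the diagonal in $NB_I(X)\times NB_I(X)$, to control it along pairs $\tilde\phi_1,\tilde\phi_2\colon(\mathbb{C},0)\to NB_I(X)$ with $\tilde\phi_i(0)=z$. Project to $\phi=(\pi_N\circ\tilde\phi_1,\pi_N\circ\tilde\phi_2)$ and use the hypothesis to extract $\alpha_j\in\mathcal{O}_1$ with $h\circ\pi_N\circ\tilde\phi_i=\sum_j\alpha_j(g_j\circ\pi_N\circ\tilde\phi_i)$. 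Dividing by $f\circ\tilde\phi_i$, with a common $f$ available since both lifts limit to $z$, and subtracting the two equations yields
\[
(h/f)\circ\tilde\phi_1-(h/f)\circ\tilde\phi_2=\sum_j\alpha_j\bigl[(g_j/f)\circ\tilde\phi_1-(g_j/f)\circ\tilde\phi_2\bigr].
\]
Each $g_j/f$ is regular near $z$ and hence Lipschitz in local coordinates, and each $\alpha_j\in\mathcal{O}_1$ is bounded, so the right-hand side is dominated by the supremum of coordinate differences along the two lifts; this is precisely the desired Lipschitz estimate.

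The main obstacle will be the forward direction in the case $z_1\ne z_2$: producing a single coefficient tuple $(\alpha_j)\in\mathcal{O}_1^{k}$ solving the two equations at distinct points of the exceptional divisor requires exactly the cross-point compatibility encoded in the saturated structure sheaf but absent from the ordinary integral closure, and this is the step where the passage from $\bar{I}$ to $I_S$ becomes essential.
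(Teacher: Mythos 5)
There is a genuine gap, and it originates in your very first step: you take $I_D$ to be generated by the doubles $g_{j,D}$ of a generating set of $I$, but the paper defines $I_D$ as the submodule generated by $h_D$ for \emph{every} $h\in I$. These are different modules: for $a\in\cO_{X,x}$ one has $(ag_j)_D=(a\circ\pi_1)\,g_{j,D}+\bigl(0,(a\circ\pi_2-a\circ\pi_1)(g_j\circ\pi_2)\bigr)$, so $I_D$ also contains the generators $\bigl(0,(z_i\circ\pi_1-z_i\circ\pi_2)(g_j\circ\pi_2)\bigr)$ (these are exactly the extra generators visible in the example following the theorem and in the proofs of Lemma 3.4 and Proposition 4.2). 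Consequently your curve criterion --- a \emph{common} tuple $(\alpha_j)$ solving $h\circ\phi_i=\sum_j\alpha_j(g_j\circ\phi_i)$ for $i=1,2$ --- characterizes a strictly smaller module than $\overline{I_D}$. It is in fact false even for elements of $I$ itself: take $I=(g)$ principal and $h=z_1g\in I\subset I_S$; your system forces $\alpha=z_1\circ\phi_1=z_1\circ\phi_2$, which fails for generic test curves. The correct condition only asks that $h\circ\phi_2-\sum_j\alpha_j(g_j\circ\phi_2)$ lie in $(z_i\circ\phi_1-z_i\circ\phi_2)\cdot\phi_2^*(I)$, and it is precisely this slack, measured by coordinate differences, that the Lipschitz hypothesis is designed to fill. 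Your ``main obstacle'' in the forward direction --- assembling a single $\alpha$ working at two distinct points of the exceptional divisor --- is an artifact of the wrong generating set, and the sentence ``assemble a common $\alpha$ by combining the local solutions'' is a placeholder for the entire content of the theorem rather than an argument. Your converse direction could be salvaged (the extra generators of $I_D$ only contribute terms already bounded by coordinate differences times the local generator of $\pi_N^*I$), but as written it too invokes the incorrect common-$\alpha$ decomposition.

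For comparison, the paper does not reprove the equivalence at all: it cites Theorem 2.3 of \cite{GL1}, which establishes the statement under the side hypothesis $h\in\bar I$, and then observes that this hypothesis is automatic in both directions --- $I_S\subset\bar I$ by construction, and $h_D\in\overline{I_D}$ forces $h\in\bar I$ by applying the functional $(1,0)$ to land in the integral closure of $\pi_1^*(I)$. If you want a self-contained proof, the right starting point is the correct generating set of $I_D$ above, after which the forward direction reduces to absorbing the single obstruction term $\bigl(0,\,h\circ\phi_2-\sum_j\alpha_j(g_j\circ\phi_2)\bigr)$ into the generators $\bigl(0,(z_i\circ\phi_1-z_i\circ\phi_2)(g_j\circ\phi_2)\bigr)$ using the Lipschitz estimate on the normalized blow-up.
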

\begin {proof} This is theorem 2.3 of \cite {GL1}, and is proved there under the additional assumption that $h\in\bar I$. However, as we have noted if $h\in I_{S}$, then $h\in\bar I$. If $h_D\in{\overline I_D}$, it follows that $(1,0)\cdot h_D$ is in the integral closure of $\pi_1^*(I)$ on $X\times X$, which clearly implies $h\in\bar I$.
\end{proof}

Here is an example showing the difference between the integral closure of the Jacobian ideal and its saturation. Consider $f(x,y)=x^2+y^p$, $p>3$ odd. Denote the plane curve defined by $f$ by $X$. Then $X$ has a normalization given by $\phi =(t^p,t^2)$. The elements in the integral closure of the Jacobian ideal are just those ring elements $h$ such that $h\circ \phi\in \phi^*(J(f))=(t^p)$. Now $y^q\circ \phi=t^{2q}$, so $y^q\in{\overline {J(f)}}$ for $q>p/2$. Denote a matrix of generators for $J(f)_D$ by $[J(f)_D]$. Consider the curve mapping into $X\times X$ given by  $\Phi(t)=(t^p,t^2,t^p,ct^2)$, where $c$ is a $p$-th root of unity different from $1$. Now consider the ideal generated by the entries of the vector
$$<1,-1> [J(f)_D]\circ \Phi(t).$$
This ideal is generated by $(y^{p-1}-y'^{p-1}, (x,x',y^{p-1},y'^{p-1})(y-y'))\circ \Phi(t)=(t^{p+2})$. Meanwhile the order in $t$ of $<1,-1>(y^q, y'^q)\circ \Phi(t)=2q$. If $p<2q<p+2$ ie. $q=(p+1)/2$, then $(y^q, y'^q)$ cannot be in ${\overline {J(f)_D}}$, hence $y^q\notin J(f)_S$ but $y^q$ is in ${\overline {J(f)}}$.

Because we have re-cast the Lipschitz saturation of an ideal in integral closure terms, the invariants associated with integral closure become available to describe/control the Lipschitz saturation of an ideal. Notice first that the multiplicity of an ideal doesn't help, because the multiplicity of $I_S$ is same as the multiplicity of $I$ since they have the same integral closure. 

Even if $X$ is an isolated hypersurface singularity, $J(f)_D$ will not have finite colength, even in the plane curve case. The co-support will be $X\times {0}\cup {0}\times X\cup \Delta X$ in $X\times X$. However the multiplicity of the pair offers a way around this. The module $J(f)_D$ has a simple description, as we will see, off the origin in each of these three sets, and any integral closure condition we wish to use is easily checked because of this structure. This suggests looking for the largest module whose integral closure agrees with $J(f)_D$ off the origin, and using the multiplicity of the pair as our invariant. In the notation of \cite{G-5}, this module is denoted $H_{2n-3}(J(f)_D)$. This is the integral hull of $J(f)_D$ of codimension  $2n-3$, which means the integral closure of $J(f)_D$ and $H_{2n-3}(J(f)_D)$ agree off a set of codimension $2n-2$, ie. off $(0,0)$ in $X^{n-1}\times X^{n-1}$ . The next lemma identifies $H_{2n-3}(J(f)_D)$.

\begin{lem} Suppose $X^{n-1}$ is an isolated hypersurface singularity, defined by $f$. Then $H_{2n-3}(J(f)_D)={\overline {J(f)}}_D$.
\end{lem}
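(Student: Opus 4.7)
My plan is to prove the equality by establishing the two inclusions separately. By the very definition of the integral hull, $H_{2n-3}(J(f)_D)$ consists of the sections of $\cO^2_{X\times X,(0,0)}$ that belong to $\overline{J(f)_D}$ locally at every point $(p,q)\neq (0,0)$, so both inclusions reduce to pointwise computations at off-origin points. The key preliminary is the structure of $J(f)_D$ and $\overline{J(f)_D}$ at each $(p,q)\neq (0,0)$, which is controlled by the fact that $X$ has an isolated singularity: $J(f)\cdot\cO_{X,p}=\cO_{X,p}$ whenever $p\neq 0$.

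I would split the off-origin points into three cases: (a) $p\neq q$ with both nonzero, (b) $p=q\neq 0$, and (c) exactly one coordinate is zero. In case (a), coherence of $J(f)$ combined with the unit-ideal property at both $p$ and $q$ yields $g_1,g_2\in J(f)$ with $(g_i(p),g_i(q))$ a basis of $\mathbb{C}^2$, whence $J(f)_D$ already generates $\cO^2_{X\times X,(p,q)}$. In case (b), using smooth local coordinates $u,v$ at $p$ and the Taylor identity $g(v)-g(u)\in (u-v)\cO$, I obtain that $J(f)_D$ at $(p,p)$ equals the diagonally symmetric module $\{(a,b)\in\cO^2 : b-a\in (u-v)\cO\}$, a module that is already integrally closed. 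In case (c), say $q=0,\ p\neq 0$, a choice of $g_1\in J(f)$ with $g_1(p)\neq 0$ makes $g_1\pi_1$ a unit at $(p,0)$, and the curve criterion reduces the computation of $\overline{J(f)_D}$ to the integral closure of the ideal generated by the ``Wronskians'' $g_i(v)g_1(u)-g_i(u)g_1(v)$.

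For the inclusion $\overline{J(f)}_D\subseteq H_{2n-3}(J(f)_D)$, I would check that $h_D$ lies in $\overline{J(f)_D}$ at every $(p,q)\neq (0,0)$ for each $h\in\overline{J(f)}$: cases (a) and (b) are immediate (in (b), one applies the Taylor identity to $h$ itself), while case (c) uses the curve criterion combined with the fact that $h\circ\phi_2\in\phi_2^*(J(f))\cO_1$ (since $h\in\overline{J(f)}$) to exhibit the required $\cO_1$-linear combination of $\Phi^*$-generators. For the reverse inclusion $H_{2n-3}(J(f)_D)\subseteq\overline{J(f)}_D$, I argue that a section $\eta=(\eta_1,\eta_2)$ in the hull must, by the axial-point analysis of (c), have each component $\eta_j\in\pi_j^*(\overline{J(f)})$, and the diagonal-compatibility coming from (b) links the two presentations into a single $\cO_{X\times X}$-coefficient decomposition, producing $\eta\in\overline{J(f)}_D$.

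The main obstacle is case (c): one must verify that the Wronskian ideal generated by elements of $J(f)$ has the same integral closure as the Wronskian ideal generated by elements of $\overline{J(f)}$ at axial points. This is essentially a descent statement, requiring that integral-closure information on $J(f)$ transfer correctly to the diagonalized setting, and it is the subtlety that distinguishes $J(f)_D$ from $\overline{J(f)}_D$ as submodules of $\cO^2_{X\times X}$ exactly along the codimension $2n-3$ strata.
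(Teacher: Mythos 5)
Your decomposition of $X\times X\setminus\{(0,0)\}$ into generic off-diagonal points, diagonal points, and axial points is exactly the paper's, and your cases (a) and (b) match its proof in substance: at $(p,q)$ with $p\neq q$ both nonzero, $J(f)_D$ is already all of $\cO^2_{X\times X,(p,q)}$, and at $(p,p)$, $p\neq 0$, both $J(f)_D$ and $\overline{J(f)}_D$ coincide with the integrally closed module generated by $(1,1)$ and $I_{\Delta}\oplus I_{\Delta}$. The genuine gap is case (c), which you yourself label ``the main obstacle'' and leave unresolved; the Wronskian reduction you set up there is a symptom of using too small a generating set. By definition $I_D$ is generated by $h_D$ for \emph{every} $h\in I$, not only by the doubles of the partial derivatives, so at $(p,0)$ with $p\neq 0$ you may choose $i,j$ with $z_i(p)\neq 0$ and $f_j(p)\neq 0$ (writing $f_j=\pd f{z_j}$) and compute
$$(z_if_j)_D-(z_i\circ\pi_2)\,(f_j)_D=\bigl((z_i\circ\pi_1-z_i\circ\pi_2)(f_j\circ\pi_1),\,0\bigr),$$
which is a unit multiple of $(1,0)$ near $(p,0)$ because $z_i\circ\pi_2$ vanishes there. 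Hence $(1,0)\in J(f)_D$ locally, and subtracting $(h\circ\pi_1)\cdot(1,0)$ from each generator $h_D$ gives $J(f)_D=\cO_{X\times X,(p,0)}\oplus\pi_2^*J(f)$ and likewise $\overline{J(f)}_D=\cO\oplus\pi_2^*\overline{J(f)}$; both have integral closure $\cO\oplus\overline{\pi_2^*J(f)}$. No descent statement about Wronskian ideals of $J(f)$ versus $\overline{J(f)}$ is needed --- the difficulty you isolate at the end of your write-up does not actually arise. This is precisely how the paper disposes of the axial points.

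A secondary weakness: your argument for the inclusion $H_{2n-3}(J(f)_D)\subseteq\overline{J(f)}_D$ (``diagonal-compatibility \dots links the two presentations'') is too vague to verify as written. Once case (c) is repaired as above, however, you have an explicit identification of $\overline{J(f)_D}$ at every point of $X\times X$ other than the origin, equal at each such point to the closure of $\overline{J(f)}_D$, and both inclusions then follow directly from the definition of the codimension $2n-3$ hull --- which is all the paper's own proof records.
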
 
\begin{proof}
We'll show that the integral closure of $J(f)_D$ and ${\overline {J(f)}}_D$ agree off the origin in $X\times X$.

Suppose $p=(x,x')\notin X\times {0}\cup {0}\times X\cup \Delta X$. Then for some $i,j,k$, $f_j(x)(z_i-z'_i)$ and $f_k(x')(z_i-z'_i)$ are not zero at $p$. This implies that both modules have rank $2$ at $p$, hence are equal. 

Suppose $p\in \Delta_X$, $p\neq (0,0)$; then for some $i$, $f_i(x)\neq 0 $. This implies $I_{\Delta}\oplus I_{\Delta}$ is in both modules. Further by adding elements of the form $(0, f_i(z)-  f_i(z'))$ which are in $I_{\Delta}\oplus I_{\Delta}$ to $( f_i(z),  f_i(z'))$, we see both modules contain $(1,1)$. Since both modules are contained in the module generated by $(1,1)$ and $I_{\Delta}\oplus I_{\Delta}$, and this module is integrally closed, the result is checked on $\Delta_X-(0,0)$.

Suppose $p=(x,0)$, $\neq 0$. Since $x\neq 0$, $J(f)_D$ contains $(1,0)$ and $(0,J(f))$.Thus $${\overline {J(f)_D}}={\cO}_{X,x}\oplus \overline {J(f)}={\overline {J(f)}}_D.$$

\end{proof}

The lemma suggests that it is interesting to consider the multiplicity of the pair $J(f)_D,{\overline {J(f)}}_D$, and we will use this invariant in the last section in the study of hyperplane sections of $X$.
For now we remark as a corollary of the proof of the lemma, we have for any $I$ an ideal of finite colength in any $\cO_X^d$,  that $H_{2d-1}(I)=(\overline I)_D$.
As a  corollary we have:

\begin{cor} Suppose $I\subset J\subset \overline I$ are ideals in $\cO_{X,x}$, with $X,x$ equidimensional, then $e(I_D, \overline I_D)=e(J_D, \overline I_D)$ if and only if 
${\overline I_D}={\overline J_D}$. 
\end{cor}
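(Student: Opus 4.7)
The plan is to deduce the corollary directly from the additivity principle for multiplicities of pairs, combined with the characterization of when such a multiplicity vanishes.

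First I would check that the three pairs $(I_D, J_D)$, $(J_D,\overline{I}_D)$, and $(I_D,\overline{I}_D)$ all have well-defined multiplicities. Since $I \subset J \subset \overline{I}$, we have $\overline{J} = \overline{I}$, and the argument of Lemma 3.4 adapts verbatim (the structure at points of $X\times X$ off the diagonal, on the diagonal away from $(x,x)$, and on the axes $X\times\{0\} \cup \{0\}\times X$ away from the origin depends only on $\overline{I}$ and not on the particular choice of ideal with that integral closure). This shows $\overline{I_D}=\overline{J_D}=\overline{\overline{I}_D}$ on the complement of $(x,x)$ in a small neighborhood, so the Kleiman--Thorup multiplicities of all three pairs are defined.

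Next I would invoke the additivity principle from Kleiman--Thorup, applied to the chain of submodules $I_D \subset J_D \subset \overline{I}_D$, to obtain
$$e(I_D, \overline{I}_D) \;=\; e(I_D, J_D) \;+\; e(J_D, \overline{I}_D).$$
Consequently $e(I_D,\overline{I}_D)=e(J_D,\overline{I}_D)$ holds if and only if $e(I_D, J_D)=0$.

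Finally, because $X$ is equidimensional, $X\times X$ is equidimensional, so the converse part of the Kleiman--Thorup criterion (recalled in Section 2) applies: $e(I_D,J_D)=0$ if and only if $\overline{I_D}=\overline{J_D}$. Chaining the two equivalences gives the statement.

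The one step that requires genuine care, rather than being a direct citation, is the well-definedness of $e(I_D, J_D)$, since a priori $I_D$ and $J_D$ might differ on a set of positive dimension. This is the main obstacle, and it is overcome by noting that off $(x,x)$ the inclusion $I_D\subset J_D\subset \overline{I}_D$ is squeezed between a module and its integral closure (using $\overline{J}=\overline{I}$ and the explicit local description from Lemma 3.4), so the exceptional divisor $D_{I_D,J_D}$ is confined to the fiber over $(x,x)$, ensuring the multiplicity is finite and the principle of additivity applies.
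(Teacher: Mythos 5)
Your proof is correct and follows essentially the same route as the paper: additivity of the multiplicity of pairs applied to the chain $I_D\subset J_D\subset \overline{I}_D$ reduces the statement to $e(I_D,J_D)=0$, which by the Kleiman--Thorup vanishing criterion (using equidimensionality for the converse) is equivalent to $\overline{I_D}=\overline{J_D}$. The paper's proof is a one-line version of this; your additional care about the well-definedness of $e(I_D,J_D)$, via the observation that the closures agree off the origin as in the lemma identifying $H_{2d-1}(I)=(\overline{I})_D$, is a point the paper leaves implicit.
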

\begin{proof} From the additivity of multiplicity of pairs \cite{KT} it follows that $e(I_D, J_D)=0$ which is equivalent to their integral closures being the same.
\end{proof}

\begin {cor}  Suppose $I\subset J\subset \overline I$ are ideals in $\cO_{X,x}$, with $X,x$ equidimensional, then $e(I_D, \overline I_D)=e(J_D, \overline I_D)$ if and only if $I_S=J_S$.
\end {cor}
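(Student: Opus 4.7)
The plan is to reduce the statement to the previous corollary (Corollary 3.5) by showing that $\overline{I_D} = \overline{J_D}$ is equivalent to $I_S = J_S$; combining this equivalence with Corollary 3.5 immediately yields the result. So the work consists of proving the two directions of this auxiliary equivalence, using Theorem 3.3 as the main bridge between Lipschitz saturations and integral closure on $X\times X$.

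For the direction $\overline{I_D} = \overline{J_D} \Rightarrow I_S = J_S$, I would simply apply Theorem 3.3 symmetrically: an element $h \in \cO_{X,x}$ lies in $I_S$ iff $h_D \in \overline{I_D}$, and likewise for $J_S$ via $\overline{J_D}$. Equality of the integral closures forces the two membership conditions to coincide, so $I_S = J_S$.

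For the converse $I_S = J_S \Rightarrow \overline{I_D} = \overline{J_D}$, the containment $I \subset J$ already gives $I_D \subset J_D$ and hence $\overline{I_D} \subset \overline{J_D}$, so only the reverse inclusion is at issue. Here I would take any generator $h \in J$ of $J_D$; by definition $h \in J \subset J_S$, and by hypothesis $J_S = I_S$, so $h \in I_S$. Theorem 3.3 then puts $h_D \in \overline{I_D}$. Since the $h_D$ for $h$ ranging over a generating set of $J$ generate $J_D$, we conclude $J_D \subset \overline{I_D}$, hence $\overline{J_D} \subset \overline{I_D}$.

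Combining these with Corollary 3.5 finishes the proof. The equidimensionality hypothesis on $(X,x)$ is needed only where it was needed in Corollary 3.5, namely to invoke the converse part of the Kleiman--Thorup fact that $e(M,N)=0$ implies $\overline{M}=\overline{N}$. I do not see a serious obstacle in the argument; the only subtlety is to notice that generators of $J$ give generators of $J_D$, so that checking $h_D \in \overline{I_D}$ on a generating set suffices for $J_D \subset \overline{I_D}$.
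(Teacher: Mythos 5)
Your overall route is exactly the paper's: the paper's entire proof of this corollary is the one-line remark that it ``follows from the connection between the Lipschitz saturation of an ideal and integral closure,'' which is precisely your reduction of $I_S=J_S$ to $\overline{I_D}=\overline{J_D}$ via Theorem 3.3, combined with the preceding corollary. Both directions of your auxiliary equivalence are sound in substance.

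There is, however, one false intermediate claim, and it is exactly the point you single out as ``the only subtlety'': the $h_D$ for $h$ ranging over a generating set of $J$ do \emph{not} generate $J_D$. By the paper's definition, $J_D$ is generated by $h_D$ for \emph{all} $h\in J$, and this is genuinely larger: if $J=(g_1,\dots,g_q)$ and $a\in\cO_{X,x}$, then $(ag_i)_D-(a\circ\pi_1)(g_i)_D=(0,\,((a\circ\pi_2)-(a\circ\pi_1))(g_i\circ\pi_2))$, which need not lie in the module spanned by the $(g_j)_D$. The paper's own example with $f=x^2+y^p$ depends on this: the order $t^{p+2}$ there comes from generators like $(0,(y\circ\pi_1-y\circ\pi_2)(x\circ\pi_2))$, whereas the doubles of the generators of $J(f)$ alone would only give order $2p-2$ on that test curve. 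Fortunately the error is harmless for your argument: since \emph{every} $h\in J$ lies in $J\subset J_S=I_S$, Theorem 3.3 gives $h_D\in\overline{I_D}$ for every $h\in J$, and since $\overline{I_D}$ is a submodule containing this full generating set of $J_D$, you still conclude $J_D\subset\overline{I_D}$ and hence $\overline{J_D}\subset\overline{I_D}$. So the proof stands once you quantify over all of $J$ rather than over a generating set.
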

\begin{proof} This follows from the connection between the Lipschitz saturation of an ideal and integral closure.
\end{proof}

Now we add the necessary structure to deal with families of spaces.

Just as Pham-Teissier extended their original definition to a family of spaces, we can do the same.
Suppose $X^{d+k},0$ is an analytic space containing a smooth subset $Y^k,0$, and $(X^{d+k}, p)$ is a family of spaces over $Y$, $X$, $Y$ embedded in 
$\mathbb{C}^{n+k},0$, so that $p$ is the projection on the last $k$ factors of $\mathbb{C}^{n+k},0$, where $Y^k=0\times \mathbb{C}^{k}$.

Then, in the definition of the Lipschitz saturation rel $Y$ of the local ring of $X^{d+k},0$, we use a set of local coordinates on the ambient space which restrict to generators of the maximal ideals of the fibers  of $X$ over $Y$. This amounts to looking at the fiber product of the normalization of $X$ with itself over $Y$, and asking that locally $h\circ p_1-h\circ p_2$ is in the integral closure of the double of the ideal generated by these coordinates. 

Given an ideal sheaf $I$ on $X^{d+k},0$, using the relative saturation, we can define the Lipschitz saturation of $I$ relative to $Y$. When we are working in the context of a family of spaces we will also use $I_S$ to denote this saturation. In a similar way, we can develop an equivalent integral closure condition using modules as before, just working on $X\times_Y X$ instead of $X\times X$.

In practice we will be working with ideal sheaves on a family of spaces, where the ideals vanish on $Y$, and our local coordinates at points of $B_I(X^{n+k})$ consist of the pullbacks of a set of generators of $m_Y$ and local coordinates on the projective space(s) in the blow-up.  

It is not difficult to check that Theorem 2.3 of \cite{GL1} continues to hold in this new context.

Having constructed the necessary infinitesimal objects we now develop our condition.
\vskip .1in
\noindent {\it Setup} Let $X^{n+k},0 \subset \mathbb{C}^{n+1+k},0$ be a hypersurface, containing a smooth subset $Y$ embedded in $\mathbb{C}^{n+1+k}$ as $0\times \mathbb{C}^{k}$, with $p_Y$ the projection to $Y$. Assume $Y=S(X)$, the singular set of $X$. Suppose $F$ is the defining equation of $X$, $(z,y)$ coordinates on $\mathbb{C}^{n+1+k}$. Denote by $f_y(z)=F(z,y)$ the family of functions of defined  by $F$, and by $X_y$, $f^{-1}_y(0)$. Assume $f_y$ has an isolated singularity at the origin. Let $m_Y$ denote the ideal defining $Y$, and $J(F)_Y$, the ideal generated by the partial derivatives with respect to the $y$ coordinates, $J_z(F)$, those with respect to the $z$ coordinates. 

\begin{definition} The pair $(X,Y)$ satisfy the $iL_{m_Y}$ condition at the origin if either of the two equivalent conditions hold:

1) $J(F)_Y\subset (m_YJ_z(F))_S$

2)  $(J(F)_Y)_D\subset {\overline{(m_YJ_z(F))_D}}$.
\end{definition}

An analogous condition for $iL_{m_Y}$ is  $J(F)_Y\subset {\overline {m_YJ_z(F)}}$. This is the equivalent to the Verdier's condition W or the Whitney conditions.

Next we give the definition of $iL_A$.

\begin{definition} The pair $(X,Y)$ satisfy the  $iL_A$, at the origin if either of the two equivalent conditions hold:

1) $J(F)_Y\subset (J_z(F))_S$

2)  $(J(F)_Y)_D\subset {\overline{J_z(F))_D}}$.
\end{definition}

The analogous condition is  $J(F)_Y\subset {\overline{J_z(F)}}$. If one works on the ambient space, then this is equivalent to the A$_F$ condition. Working on $X$, it is equivalent to asking that the $X$ has no vertical tangent plane at the origin, so this is weaker than Whitney A. However, suppose $l$ is a linear form on the ambient space. Let $J(F)_l$ denote the ideal generated by applying tangent vectors in the kernel of $l$ to $F$. So $J_z(F)=J(F)_y$ in the case dim $ Y=1$. Working in the one dimensional parameter case, if there exist a pencil of forms $l_s$  including $y$ such that 
$J(F)\subset {\overline{J(F)_{l_s}}}$ then not only does Whitney A hold but the total space has no relative polar curve. This follows because if the dimension of the fiber of the limiting tangent hyperplanes over the origin is not maximal then the fiber over the origin must be in the closure of the fiber over the parameter space with $y\ne 0$, and all of these hyperplanes contain $Y$. Because the dimension of the fiber over the origin is less than maximal this also implies the polar curve is empty.  The condition with the pencil of forms ensures that no hyperplane defined by an element of the pencil can be a limiting tangent hyperplane, hence the pencil of hyperplanes has no intersection with the fiber over zero, which must therefore have less than maximal dimension.

Since there are different ways in which the total space $X^{n+k}$ can be made into a family of spaces, it is natural to ask if the  conditions we have defined  depend on the projection to $Y$ which defines the family.  We now show that the condition $iL_{m_Y}$ does not depend  on the projection to $Y$.

\begin{Prop} In the above set-up the following conditions are equivalent. 

1) $(J(F)_Y)_D\subset {\overline{(m_YJ_z(F))_D}}$.

2) $(J(F)_Y)_D\subset {\overline{(m_YJ(F))_D}}$.
\end {Prop}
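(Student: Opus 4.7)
The implication $1)\Rightarrow 2)$ is immediate: the ideal inclusion $m_Y J_z(F)\subset m_YJ(F)$ passes to the doubled modules $(m_YJ_z(F))_D\subset (m_YJ(F))_D$ on $X\times_Y X$, and hence to their integral closures, so $\overline{(m_YJ_z(F))_D}\subset\overline{(m_YJ(F))_D}$.

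For $2)\Rightarrow 1)$, the plan is to first strip off the ``doubling'' and establish the ordinary ideal equality $\overline{m_YJ_z(F)}=\overline{m_YJ(F)}$, then promote it to an equality of doubled integral closures.  Applying the $\cO_{X\times_YX}$-module projection $(h_1,h_2)\mapsto h_1$ to hypothesis $2)$ and pulling back along the diagonal $X\hookrightarrow X\times_YX$ yields the ideal containment $J(F)_Y\subset\overline{m_YJ(F)}$ in $\cO_{X,x}$.  Since $m_YJ(F)=m_YJ_z(F)+m_Y\cdot J(F)_Y$ and $m_Y$ lies in the maximal ideal of $\cO_{X,x}$, the integral-closure version of Nakayama's lemma immediately absorbs the $m_Y\cdot J(F)_Y$ piece and gives $J(F)_Y\subset\overline{m_YJ_z(F)}$; in particular $\overline{m_YJ(F)}=\overline{m_YJ_z(F)}$, so Corollary~3.7 (applied to the sandwich $m_YJ_z(F)\subset m_YJ(F)\subset\overline{m_YJ_z(F)}$) reduces the equivalence of $1)$ and $2)$ to the equality $\overline{(m_YJ_z(F))_D}=\overline{(m_YJ(F))_D}$.

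To establish the doubled equality it suffices to show that each extra generator $(z_iF_{y_l})_D$ of $(m_YJ(F))_D$ lies in $\overline{(m_YJ_z(F))_D}$. Writing $\delta_i:=z_i^{(2)}-z_i^{(1)}$ on $X\times_YX$, the identity
\[
(z_iF_{y_l})_D=z_i^{(1)}(F_{y_l})_D+(0,\delta_iF_{y_l}^{(2)})
\]
splits this into a ``Nakayama-small'' summand $z_i^{(1)}(F_{y_l})_D\in m_{(0,0)}\cdot (J(F)_Y)_D$ and an off-diagonal error $(0,\delta_iF_{y_l}^{(2)})$.  The plan is to place the error in $\overline{(m_YJ_z(F))_D}$ by exploiting the Hadamard-type relation on $X\times_YX$ coming from $F(z^{(1)},y)=F(z^{(2)},y)=0$: Taylor expansion gives $\sum_i\delta_iG_i=0$ with analytic $G_i$ satisfying $G_i|_\Delta=F_{z_i}$.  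Multiplying by $F_{y_l}^{(2)}$ produces the needed linear dependency among the $\delta_iF_{y_l}^{(2)}$, whose coefficients are first-order perturbations of $F_{z_i}^{(1)}$; combined with $F_{y_l}\in\overline{m_YJ_z(F)}$ from the previous step, the curve criterion for integral closure of modules then exhibits $(0,\delta_iF_{y_l}^{(2)})$ as an integral limit of elements of $(m_YJ_z(F))_D$.  A final application of Nakayama's lemma for integral closure of modules, in the doubled setting on $X\times_YX$, absorbs the $m_{(0,0)}\cdot(J(F)_Y)_D$ term and produces condition~$1)$.  The main obstacle is this last step: the curve-by-curve bookkeeping needed to turn the Hadamard syzygy $\sum_i\delta_iG_i=0$ into an explicit membership of $(0,\delta_iF_{y_l}^{(2)})$ in $\overline{(m_YJ_z(F))_D}$ requires careful use of the fact that a first-component syzygy survives the twist by $\delta_i$ in the second component.
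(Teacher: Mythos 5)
Your architecture coincides with the paper's; the paper simply carries out the same computation after pulling back along a test curve $\Phi=(\phi_1,\phi_2)$ in $X\times_YX$ rather than at the level of modules. In both arguments one first extracts from 2) the undoubled containment $J(F)_Y\subset\overline{m_YJ(F)}$ (the paper's preliminary Lemma does this with curves whose first component is constant; your diagonal--plus--first-projection argument is an equally valid variant), upgrades it to $J(F)_Y\subset\overline{m_YJ_z(F)}$ by integral-closure Nakayama, and then uses the splitting $(z_i\pd F{y_l})_D=(z_i\circ p_1)(\pd F{y_l})_D+(0,\delta_i\,\pd F{y_l}\circ p_2)$, with $\delta_i=z_i\circ p_2-z_i\circ p_1$, to reduce 1) to one last application of Nakayama for integral closure. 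One caution: the appeal to Corollary 3.7 buys you nothing here (it converts equality of doubled integral closures into equality of multiplicities of pairs, which you neither have nor need), and it is worth remembering that $\overline{I}=\overline{J}$ never formally implies $\overline{I_D}=\overline{J_D}$ --- that failure is the whole point of the Lipschitz saturation, as the $x^2+y^p$ example in Section 3 shows --- so the doubled equality genuinely requires the generator-by-generator argument you sketch.

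The one step that would fail as written is the Hadamard syzygy. A single linear relation $\sum_i\delta_iG_i\,\pd F{y_l}\circ p_2=0$ cannot certify that each individual element $(0,\delta_i\,\pd F{y_l}\circ p_2)$ lies in $\overline{(m_YJ_z(F))_D}$, and no amount of curve bookkeeping will extract individual memberships from one relation. Fortunately the device is unnecessary. For any ideal $I$, any generator $g$ of $I$ and any $a\in\cO_X$ one has $(ag)_D-(a\circ p_1)\,g_D=(0,(a\circ p_2-a\circ p_1)(g\circ p_2))$, so $I_D$ contains $(0,\,I_\Delta\cdot p_2^*(I))$, where $I_\Delta$ is the (relative) diagonal ideal generated by the $\delta_i$. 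Taking $I=m_YJ_z(F)$ and using $\pd F{y_l}\in\overline{m_YJ_z(F)}$ from your first step, persistence of integral closure gives $\delta_i\,\pd F{y_l}\circ p_2\in I_\Delta\cdot\overline{p_2^*(m_YJ_z(F))}\subset\overline{I_\Delta\cdot p_2^*(m_YJ_z(F))}$, and hence $(0,\delta_i\,\pd F{y_l}\circ p_2)\in\overline{(m_YJ_z(F))_D}$. This is exactly how the paper's proof disposes of the corresponding term: it substitutes $\pd F{y}\circ\phi_2\in\phi_2^*(m_YJ_z(F))$ and observes that the resulting products are among the generators of $\Phi^*\bigl((m_YJ_z(F))_D\bigr)$. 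With that replacement your plan closes up and gives a correct proof.
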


The analogous result for W is quite easy. The Lipschitz case is more technical. We first show:

\begin{Lemme} In the above setup if   $(J(F)_Y)_D\subset {\overline{(m_YJ(F))_D}}$, then  $J(F)_Y\subset{\overline{ m_YJ(F)}}$, hence condition $W$ holds for the pair $(X-Y, Y)$ at the origin (and hence on some Z-open subset of $Y$ containing the origin.)
\end{Lemme}

\begin{proof}  We use the curve criterion. We can choose a curve $\Phi=(\phi_1, \phi_2)$, where $\phi_1$ maps $\Bbb C,0$ to $0$, and $\phi_2$ is arbitrary. Then the curve criterion for this curve becomes $\phi^*_2(J(F)_Y)\subset  \phi^*_2(m_YJ(F))$. Here an easy argument using Nakayama's lemma implies that $\phi^*_2(J(F)_Y)\subset  \phi^*_2(m_YJ_z(F))$, which implies the $W$ condition.

\end{proof}

Now we prove our proposition.

\begin{proof} We use the curve criterion again. Let $\Phi=(\phi_1, \phi_2)$. It is enough to prove it in the case where $Y$ is one dimensional, since the notation is the only part of the proof which is harder in general. It is also clear that 1) implies 2), so we assume 2). By the given we have:

$$  (\frac{\partial F}{\partial y})_D\circ\Phi=\sum g_{i,j}(t)(z_i\frac{\partial F}{\partial {z_j}})_D\circ \Phi+\sum  g_{i,j,k}(t)(z_k\circ \phi_1-z_k\circ\phi_2)(0, z_i\frac{\partial F}{\partial {z_j}})\circ\phi_2$$

$$+\sum h_i(t)(z_i  \frac{\partial F}{\partial y})_D \circ\Phi.$$

We now work mod $m_1\Phi^*(m_YJ(F)_D)$ and we call the left side of the above equation $*$. Subtract $\sum h_i(t) z_i\circ \phi_1 *$ from both sides of the above equation. This sum is in $m_1\Phi^*(m_YJ(F)_D)$, so we get:

$$( \frac{\partial F}{\partial y})_D\circ\Phi=\sum g_{i,j}(t)(z_i\frac{\partial F}{\partial {z_j}})_D\circ \Phi+\sum  g_{i,j,k}(t)(z_k\circ \phi_1-z_k\circ\phi_2)(0, z_i\frac{\partial F}{\partial {z_j}})\circ\phi_2$$

$$+\sum h_i(t)(z_i \circ \phi_2-z_i\circ \phi_1)(0, \frac{\partial F}{\partial y} \circ\phi_2).$$

Now we use the lemma to write $\frac{\partial F}{\partial y} \circ\phi_2$ as an element of $\phi_2^*(m_YJ_z(F))$. Making the substitution into the line above shows that the terms there are $0$ mod \hskip 2pt$m_1\Phi^*(m_YJ(F)_D)$, hence we have $ \frac{\partial F}{\partial y}\circ\Phi$ is an element of $(m_YJ_z(F))_D$ mod\hskip 2pt $m_1\Phi^*(m_YJ(F)_D)$. Hence by Nakayama's lemma, $\Phi^*m_YJ_z(F))_D=
\Phi^*m_YJ(F))_D$
and the proposition follows.
 \end{proof}

While a similar result for $iL_A$ doesn't make sense, if we ask that $(J(F)_Y)_D$ is strictly dependent on $J_z(F))_D$ then an analogous result holds.  (Recall that an element $h\in \cO^p_{X,x}$ is strictly dependent on $M\subset \cO^p_{X,x}$, if for each curve $\phi$ $h\circ \phi\in m_1\phi^*(M)$. The set of elements strictly dependent on $M$ are denoted $M^+$.) 
\vskip .2in

We give a geometric interpretation of these conditions at the level of the family $X^{n+k}$. We make some preliminary constructions to do this. Denote the coordinates on  $\mathbb P^n$ by $T_i$, $1\le i\le n+1$, let $V_i$ be the subset of $\mathbb P^n$ defined by $T_i\ne 0$, and let $U_i$ denote $B_{J_z(F)}(X^{n+k})\cap (X\times V_i)$. At each point of $U_i$, $\pd F{z_i}\circ \pi$ is a local generator of the principal ideal sheaf $\pi^*(J_z(F))$. The condition that  $\pd F{y_j}$ be in the Lipschitz saturation of $J_z(F))$ means that at each point of $U_i$, 
${{\pd F{y_j}}\over {\pd F{z_i}}}\circ \pi$ is Lipschitz rel $Y$ with respect to the local coordinates, which are $z_k\circ \pi$, $1\le k\le n+1$, and $T_j/T_i$, $1\le j\le n+1$, $j\ne i$. Since ${{\pd F{z_j}}\over {\pd F{z_i}}}\circ \pi={{T_j}\over{T_i}}$, this implies that
${{\pd F{y_j}}\over {\pd F{z_i}}}$ is Lipschitz with respect to $z_k$, $1\le k\le n+1$, and ${{\pd F{z_j}}\over {\pd F{z_i}}}$, $1\le j\le n+1$, $j\ne i$ on $\pi(U_i)$.

This implies the existence of $k$ vectorfields tangent to $X$ defined on each $\pi(U_i)$ of the form 
$$\vec v_{j,i}={\pd {}{y_j}} -{{\pd F{y_j}}\over {\pd F{z_i}}}{\pd {}{z_i}},$$
each vectorfield Lipschitz relative to $Y$,with respect to $z_k$, $1\le k\le n+1$, and ${{\pd F{z_j}}\over {\pd F{z_i}}}$, $1\le j\le n+1$, $j\ne i$. Since every element of $J_z(X)$ is in the Lipschitz saturation of $J_z(X)$ it is not true apriori that these vectorfields are extensions of the constant fields on $Y$. However, if we assume the A$_F$ condition holds for $(X-Y,Y)$, then the quotients ${{\pd F{y_j}}\over {\pd F{z_i}}}\circ \pi$ will vanish on the exceptional divisor, and the $\vec v_{j,i}$ will be extensions of the constant fields on $Y$. 

There is another useful interpretation which we can make. Recall
the following definition of distance between two linear subspaces A, B at the origin in ${\mathbb C}^{N} $, then
			 $${ \text{\rm{dist }}}(A,B) = \mathop {\sup}\limits_{\begin{matrix} u\in {B^{\perp}}-\{0\}\\ {v\in {A-\{0\}}}\end{matrix}}
{{|(u,v)|} \over {\left\| u \right\|\left\| v \right\|}}.$$

If $p$, $p'$ are smooth points in the same fiber $y$ over $Y$ in $\pi(U_i)$, we claim that the distance between the tangent spaces to $X$ at $p$ and $p'$ is commensurate with the maximum of the distance between the tangent spaces to $X_y$ at $p$ and $p'$ and the distance between the points.

We first relate the distance defined above to a notion of distance closer to our Lipschitz condition.

Suppose ${\bf a}=(a_0,\dots, a_n)$, ${\bf b}=(b_0,\dots, b_n)$ define hyperplanes $A$ and $B$ in $\Bbb C^{n+1}$.
We will use the supnorm  on $\Bbb C^{n+1}$; suppose $||{\bf a}||=a_i$, and  $||{\bf b}||=b_i$, same index for both, for simplicity take $i=0$.

We can then also measure the distance between $A$ and $B$ by using the $ \mathop {\sup}\limits_{i, i\le i\le n} ||a_i/a_0-b_i/b_0||$. The $a_i/a_0$ are just the coordinates of the hyperplane $A$ regarded as  a point of  ${\hat{ \Bbb P}}^n$. We compare this notion of distance with the usual one.

\begin{lem} Suppose ${\bf a}=(a_0,\dots, a_n)$, ${\bf b}=(b_0,\dots, b_n)$ define hyperplanes $A$ and $B$ in $\Bbb C^{n+1}$, $||{\bf a}||=a_0$, and  $||{\bf b}||=b_0$. Then
$${ \text{\rm{dist }}}(A,B) = \mathop {\sup}\limits_{i, 1\le i\le n} ||a_i/a_0-b_i/b_0||.$$
\end{lem}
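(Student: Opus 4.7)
The plan is to reduce the double supremum defining $\mathrm{dist}(A,B)$ to a one-variable optimization and then evaluate it directly in the inhomogeneous coordinates $\alpha_i := a_i/a_0$ and $\beta_i := b_i/b_0$. Since $B$ is a hyperplane, $B^\perp$ is the one-dimensional line spanned by $\mathbf{b}$; the ratio $|(u,v)|/(\|u\|\|v\|)$ is homogeneous in $u$, so the sup over $u\in B^\perp\setminus\{0\}$ is attained (trivially) at $u=\mathbf{b}$, giving
$$\mathrm{dist}(A,B) \,=\, \sup_{v\in A\setminus\{0\}} \frac{|(\mathbf{b},v)|}{\|\mathbf{b}\|\,\|v\|}.$$

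Next I parametrize $A$. Since the normalization $\|\mathbf{a}\|=a_0$ forces $a_0\ne 0$, solving $(\mathbf{a},v)=0$ yields $v_0 = -\sum_{i=1}^n \alpha_i v_i$, so $v\in A$ is uniquely determined by the free parameters $v_1,\ldots,v_n$. Substituting this into $(\mathbf{b},v)=\sum_{i=0}^n b_i v_i$ and pulling out $b_0$ gives
$$(\mathbf{b},v) \,=\, b_0 v_0 + \sum_{i=1}^n b_i v_i \,=\, b_0 \sum_{i=1}^n (\beta_i-\alpha_i)\,v_i.$$
With $\|\mathbf{b}\|=b_0$, setting $c_i:=\beta_i-\alpha_i$ reduces the ratio to $\bigl|\sum_{i=1}^n c_i v_i\bigr|/\|v\|$.

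For the lower bound, I test the direction $v_j=1$, $v_i=0$ for $i\ne j$; the hyperplane constraint forces $v_0 = -\alpha_j$, and the normalization $|\alpha_j|\le 1$ ensures $\|v\|_\infty = 1$. The ratio is then $|c_j|$, so maximizing over $j$ yields $\mathrm{dist}(A,B)\ge \max_j |c_j|$. For the upper bound, I estimate $|\sum c_i v_i|$ in terms of $\max_j |c_j|$ and $\|v\|$, exploiting the constraint linking $v_0$ to the other coordinates together with the normalization bounds $|\alpha_i|,|\beta_i|\le 1$ to prevent double-counting across coordinates.

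The main obstacle is this upper bound: a direct triangle inequality yields only $\sum_j |c_j|$ rather than $\max_j |c_j|$, so the argument must genuinely use the supnorm hypothesis $\|\mathbf{a}\|=a_0$, $\|\mathbf{b}\|=b_0$ to couple the bound on $v_0$ to the bounds on the remaining $v_i$. Once this comparison is carried out, the two inequalities combine to give the claimed identity, and the lemma is the analog, for the supnorm, of the familiar Euclidean identity $\mathrm{dist}(A,B)=\sin\angle(\mathbf{a},\mathbf{b})$.
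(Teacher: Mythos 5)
Your reduction to $\sup_{v\in A\setminus\{0\}}|(\mathbf{b},v)|/(\|\mathbf{b}\|\|v\|)$ and your lower bound are correct, and the lower bound coincides with what the paper actually computes: your test vectors ($v_j=1$, $v_i=0$ for $i\ne j$, $v_0=-\alpha_j$) are the paper's basis vectors $a_0e_j-a_je_0$ up to the scalar $a_0$, and they give $\mathrm{dist}(A,B)\ge\sup_j|c_j|$. But the upper bound, which you yourself single out as ``the main obstacle,'' is never carried out --- the proposal ends with ``once this comparison is carried out,'' and that is exactly the step that is missing. Worse, it cannot be carried out as an exact equality. Take $\mathbf{a}=(1,0,\dots,0)$ and $\mathbf{b}=(1,1,\dots,1)$, so $\alpha_i=0$, $\beta_i=1$, $c_i=1$ and the right-hand side of the lemma equals $1$; then $v=(0,1,\dots,1)$ lies in $A$, has $\|v\|_\infty=1$, and $|b_0v_0+\sum_{i\ge1}b_iv_i|=n$ with $\|\mathbf{b}\|_\infty=1$, so $\mathrm{dist}(A,B)\ge n$. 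The crude triangle inequality $|\sum_i c_iv_i|\le n\,(\sup_j|c_j|)\,\|v\|_\infty$ shows this is worst possible, so what is actually true is the two-sided estimate
$$\sup_{1\le j\le n}|c_j|\ \le\ \mathrm{dist}(A,B)\ \le\ n\,\sup_{1\le j\le n}|c_j|,$$
i.e., equality only up to a dimension-dependent constant.

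For comparison, the paper's own proof has the same gap: it evaluates the ratio only on the basis $a_0e_i-a_ie_0$ of $A$ against the single generator of $B^{\perp}$, which again yields only the lower bound and silently assumes those vectors realize the supremum. Since the lemma is invoked afterwards only to say that the two notions of distance are ``commensurate,'' the two-sided estimate with the factor $n$ is all that the subsequent arguments require, and your outline (lower bound plus the naive triangle inequality) does deliver that. But as a proof of the stated identity your proposal is incomplete at the decisive step, and no completion of that step exists.
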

\begin{proof} A basis for the vectors in $A$ are given by $a_0e_i-a_ie_0$ where $e_k$ is the $k$-th standard basis vector in $\Bbb C^{n+1}$. Since we are using the supnorm, the terms $${{|(u,v)|} \over {\left\| u \right\|\left\| v\right\|}},$$ become 
$${{|(a_0e_i-a_ie_0,\bar {\bf b})|} \over {\left\| a_0 \right\|\left\| b_0 \right\|}}=||a_i/a_0-b_i/b_0||$$

\end{proof}

Now we return to our geometric interpretation. Since the $\pd F{y_i}$ are in the integral closure of $J_z(F)$, we may work in a system of neighborhoods $U_i$ on $X$ where we may assume for each $p\in U_i$ the values of the elements of $J(F)$ are bounded in norm by $|\pd F{z_i}(p)|$. Then, applying the above lemma, we see that the distance between tangent planes to $X$ at points $p_1$, $p_2$ in the same $U_i$ is the sup over 
$$\{||{{\pd  F{y_k}(p_1)}\over{\pd F{z_i}(p_1)}}-{{\pd  F{y_k}(p_2)}\over{\pd F{z_i}(p_2)}}||, ||{{\pd  F{z_j}(p_1)}\over{\pd F{z_i}(p_1)}}-{{\pd  F{z_j}(p_2)}\over{\pd F{z_i}(p_2)}}||\}.$$

Then condition $iL_A$ implies that this is the same as the sup over 
$$\{||{{\pd  F{z_j}(p_1)}\over{\pd F{z_i}(p_1)}}-{{\pd  F{z_j}(p_2)}\over{\pd F{z_i}(p_2)}}||\ , ||p_1-p_2||\}$$
which is the same as the maximum of the distance between the tangent spaces to $X_y$ at $p_1$ and $p_2$ and the distance between the points, $p_1$ and $p_2$.

We can say something similar for the $iL_W$ condition. First, since $iL_W$ implies $iL_A$, the same interpretation applies to the $iL_W$ condition.  But more is true, and we develop some material related to the Lipschitz saturation of the product of two ideals to explain it. 

\begin{lem} (Product lemma) Given $h$,$g$ in $\mathcal{O}_{X,x}$, $p_1$,$p_2\in X$, then 
$$\|(hg)(p_1)-(hg)(p_2)\|\le  \|h(p_1)\|\|g(p_1)-g(p_2)\|+$$
$$\hskip 2in \|g(p_2)\|\|h(p_1)-h(p_2)\|.$$
\end{lem}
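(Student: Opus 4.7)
The plan is to deploy the standard add-and-subtract trick to split $(hg)(p_1)-(hg)(p_2)$ into a piece that measures the variation of $g$ alone and a piece that measures the variation of $h$ alone, and then apply the triangle inequality together with submultiplicativity of the norm.

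Concretely, I would write
$$h(p_1)g(p_1)-h(p_2)g(p_2) = h(p_1)\bigl(g(p_1)-g(p_2)\bigr)+g(p_2)\bigl(h(p_1)-h(p_2)\bigr),$$
where the term $h(p_1)g(p_2)$ is inserted and subtracted. Taking norms of both sides, the triangle inequality gives
$$\|(hg)(p_1)-(hg)(p_2)\|\le \|h(p_1)(g(p_1)-g(p_2))\|+\|g(p_2)(h(p_1)-h(p_2))\|,$$
and submultiplicativity of the norm on each summand yields exactly the stated inequality.

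There is no real obstacle here: the lemma is a purely formal identity followed by two standard inequalities, and the particular asymmetric form of the bound (with $h$ evaluated at $p_1$ and $g$ evaluated at $p_2$) simply reflects the choice of inserting $h(p_1)g(p_2)$ rather than $h(p_2)g(p_1)$; the symmetric version with indices $1$ and $2$ swapped holds by an identical argument. The lemma will presumably be applied in the sequel by choosing $h$ and $g$ so that one factor measures the deviation from a Lipschitz-saturated generator while the other provides the correct order of vanishing, allowing products of saturated quantities to be analyzed term-by-term.
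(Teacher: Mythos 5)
Your proof is correct and is essentially identical to the paper's: both insert and subtract $h(p_1)g(p_2)$, regroup to obtain $h(p_1)(g(p_1)-g(p_2))+g(p_2)(h(p_1)-h(p_2))$, and finish with the triangle inequality and submultiplicativity. Your closing remark about choosing which cross term to insert also matches the paper's observation that one may arrange for, say, $\|g(p_i)\|$ to be the minimum of the two values.
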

\begin{proof} We have 
$$\|(hg)(p_1)-(hg)(p_2)\|=\|(hg)(p_1)-h(p_1)g(p_2)+ h(p_1)g(p_2)-(hg)(p_2)\|$$ 

$$=\|h(p_1)(g(p_1)-g(p_2))+ g(p_2)(h(p_1)-h(p_2))\|$$
$$\le \|h(p_1)\|\|g(p_1)-g(p_2)\|+\|g(p_2)\|\|h(p_1)-h(p_2)\|$$

\end{proof}
Note that we can always choose one of the terms, say $\|g(p_i)\|$, to be the minimum of the $\|g(p_i)\|$. (You cannot, in general, minimize both $h$ and $g$ terms.)

We apply this lemma to the condition for $h\in \mathcal{O}_{X,x}$ to be in the Lipschitz saturation of $IJ$,  $I$,$J$ two ideals of $\mathcal{O}_{X,x}$.

Suppose $I=(f_1,\dots,f_p)$, $J=(g_1,\dots,g_q)$. Work on the Zariski open subset $U_{m,n}$ of $(B_{IJ}(X),\pi)$
in which $(f_mg_n)\circ \pi$ is a local generator of $\pi^*(IJ)$. Local coordinates are given by the pullback of coordinates at $x$, and by $T_{i,j}$ where $(i,j)\ne(m,n)$, $1\le i\le p$,$1\le j\le q$, and where 
$$T_{i,j}={{(f_ig_j)\circ\pi}\over{{f_mg_n}\circ\pi}}$$

Note that $$T_{m,j}={{(f_mg_j)\circ\pi}\over{{f_mg_n}\circ\pi}}={{g_j\circ\pi}\over{{g_n}\circ\pi}}$$
while
$$T_{i,n}={{(f_ig_n)\circ\pi}\over{{f_ig_n}\circ\pi}}={{f_i\circ\pi}\over{{f_m}\circ\pi}}.$$

The next lemma shows that among all the $T_{i,j}$, on $U_{m,n}$ we need only consider the $T_{m, j}$ and $T_{i,n}$ to define the Lipschitz saturation of $IJ$.
As usual, $\pi_N$ denotes the normalization map, while $p_1$ and $p_2$ are projection maps from the product of the normalization of $B_{IJ}(X)$ with itself.

\begin{lem} Let $U_{m,n}$ be as above, then the ideal generated by $$\{ T_{j,n}\circ \pi_N\circ p_1- T_{j,n}\circ \pi_N\circ p_2,  T_{m,i}\circ \pi_N\circ p_1- T_{m,i}\circ \pi_N\circ p_2\}, 1\le j\le p, j\ne m, 1\le i\le q, i\ne n$$ is a reduction of the ideal generated by $$\{T_{j,i}\circ \pi_N\circ p_1- T_{j,i}\circ \pi_N\circ p_2\}$$ at points of $\pi^{-1}_N(U_{m,n})\times \pi^{-1}_N(U_{m,n})$.
\end{lem}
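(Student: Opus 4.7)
The plan is to reduce the statement to an almost-tautological identity among the $T_{j,i}$ on the chart $U_{m,n}$, combined with the product lemma just established. The first step is to note the key algebraic identity: on $U_{m,n}$ we have
$$T_{j,n} = \frac{f_j \circ \pi}{f_m \circ \pi}, \qquad T_{m,i} = \frac{g_i \circ \pi}{g_n \circ \pi}, \qquad T_{j,i} = \frac{(f_j g_i)\circ \pi}{(f_m g_n)\circ \pi},$$
so that $T_{j,i} = T_{j,n}\cdot T_{m,i}$ as regular functions on $U_{m,n}$. This is what links the large set of generators of the second ideal to the small set of generators of the first.

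Next I would observe that one containment is immediate: the generators of the first ideal are exactly those $T_{j,i}\circ \pi_N\circ p_1 - T_{j,i}\circ \pi_N\circ p_2$ for which one of the indices equals $m$ or $n$, and so they are already among the generators of the second ideal. It therefore remains to show that for every $j\ne m$ and $i\ne n$, the generator $T_{j,i}\circ \pi_N\circ p_1 - T_{j,i}\circ \pi_N\circ p_2$ lies in (in fact, in the ideal generated by, not merely the integral closure of) the generators of the first ideal. For this, apply the same algebraic manipulation that proved the product lemma: writing $q_1$, $q_2$ for the two copies of a point of $\pi_N^{-1}(U_{m,n})$ under $p_1$, $p_2$, we have
$$T_{j,i}(q_1)-T_{j,i}(q_2)=T_{j,n}(q_1)\bigl(T_{m,i}(q_1)-T_{m,i}(q_2)\bigr)+T_{m,i}(q_2)\bigl(T_{j,n}(q_1)-T_{j,n}(q_2)\bigr).$$

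The final step is to verify that the coefficients $T_{j,n}\circ \pi_N\circ p_1$ and $T_{m,i}\circ \pi_N\circ p_2$ are regular on $\pi_N^{-1}(U_{m,n})\times \pi_N^{-1}(U_{m,n})$; this is immediate since $T_{j,n}$ and $T_{m,i}$ are local coordinates on the chart $U_{m,n}$ of the blow-up, and regularity is preserved by the normalization map and by either projection from the product. Consequently the displayed identity exhibits $T_{j,i}\circ\pi_N\circ p_1 - T_{j,i}\circ\pi_N\circ p_2$ as an $\mathcal{O}$-linear combination of the generators of the first ideal, so the two ideals are actually equal on $\pi_N^{-1}(U_{m,n})\times \pi_N^{-1}(U_{m,n})$, which trivially implies the reduction claim.

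I do not expect any serious obstacle here. The only points to treat carefully are the bookkeeping of the index set (making sure every pair $(j,i)\ne(m,n)$ is handled by either the trivial containment or the product identity), and the observation that pulling $T_{j,n}$ and $T_{m,i}$ back by $\pi_N$ and by $p_1$, $p_2$ preserves regularity on the relevant open set—both of which are routine.
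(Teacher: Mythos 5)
Your argument is correct, and it rests on the same decomposition the paper uses, namely the product-rule identity $hg(p_1)-hg(p_2)=h(p_1)\bigl(g(p_1)-g(p_2)\bigr)+g(p_2)\bigl(h(p_1)-h(p_2)\bigr)$ applied to $T_{j,i}=T_{j,n}\,T_{m,i}$, together with the observation that the remaining generators of the big ideal already lie in the small one. The difference is in how the identity is deployed. The paper first takes norms (this is exactly its Product Lemma), bounds $\|T_{j,n}\circ \pi_N\circ p_1\|$ and $\|T_{m,i}\circ \pi_N\circ p_1\|$ locally by constants using the principality of $\pi^*(IJ)$ on $U_{m,n}$, and then concludes only that the small ideal is a reduction of the large one, via the boundedness criterion for integral closure. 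You instead keep the identity at the algebraic level and note that the coefficients $T_{j,n}\circ\pi_N\circ p_1$ and $T_{m,i}\circ\pi_N\circ p_2$ are not merely locally bounded but regular on $\pi_N^{-1}(U_{m,n})\times\pi_N^{-1}(U_{m,n})$, being coordinate functions of the chart pulled back through the normalization and the two projections; hence every generator $T_{j,i}\circ\pi_N\circ p_1-T_{j,i}\circ\pi_N\circ p_2$ with $j\ne m$, $i\ne n$ is an honest $\mathcal{O}$-linear combination of the small generating set. This yields equality of the two ideals, which is strictly stronger than the stated reduction and bypasses integral closure entirely. The index bookkeeping you flag is the only point to verify, and your case split (second index equal to $n$, first index equal to $m$, or neither) covers all pairs $(j,i)\ne(m,n)$.
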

\begin{proof} By the product lemma we have 

$$\|{{f_ig_j}\over{{f_mg_n}}}\circ\pi\circ\pi_N\circ p_1(z'_1,z'_2)-{{f_ig_j}\over{{f_mg_n}}}\circ\pi\circ\pi_N\circ p_2(z'_1,z'_2)\|$$
$$ \le \|{{f_i\circ\pi}\over{{f_m}\circ\pi}}\circ \pi_N\circ p_1(z'_1,z'_2)\|\|{{g_j\circ\pi}\over{{g_n}\circ\pi}}\circ\pi_N\circ p_1(z'_1,z'_2)-{{g_j\circ\pi}\over{{g_n}\circ\pi}}\circ \pi_N\circ p_2(z'_1,z'_2)\|$$
$$+ \|{{g_j\circ\pi}\over{{g_n}\circ\pi}}\circ \pi_N\circ p_1(z'_1,z'_2)\|\|{{f_i\circ\pi}\over{{f_m}\circ\pi}}\circ\pi_N\circ p_1(z'_1,z'_2)-{{f_i\circ\pi}\over{{f_n}\circ\pi}}\circ \pi_N\circ p_2(z'_1,z'_2)\|$$
Now we can bound the terms $ \|{{f_i\circ\pi}\over{{f_m}\circ\pi}}\circ \pi_N\circ p_1(z'_1,z'_2)\|$ and $\|{{g_j\circ\pi}\over{{g_n}\circ\pi}}\circ \pi_N\circ p_1(z'_1,z'_2)\|$ locally by constants because the ideal $IJ$ is principal on $U_{m,n}$. The result follows from this.

\end{proof}

We apply the above results to say something about the local vectorfields $\vec v_{i,j}$ defined above. Since ${\pd {F}{y_j}}\in (m_YJ_z(F)_S)$, we can usefully re-write $\vec v_{i,j}$ as 
$$\vec v_{i,j,k}={\pd {}{y_j}} -{{\pd F{y_j}}\over {z_k\pd F{z_i}}}z_k{\pd {}{z_i}}.$$
Denote the coefficient of ${\pd {}{z_i}}$ in $\vec v_{i,j,k}$ by $v_{i,j,k}$.

Then for pairs of points $(t,p_1),(t,p_2)$ in $\pi(U_{i,k})$ we have:
$$\|v_{i,j,k}(t,p_1)-v_{i,j,k}(t,p_2)\|\le \|{{\pd F{y_j}}\over {z_k\pd F{z_i}}}(t,p_1)\|\|z_k(p_1)-z_k(p_2)\|$$
$$+ \| z_k(p_2)\| {{\pd F{y_j}}\over {z_k\pd F{z_i}}}(t,p_1)-{{\pd F{y_j}}\over {z_k\pd F{z_i}}}(t,p_2)\|$$
Hence,
$$\|v_{i,j,k}(t,p_1)-v_{i,j,k}(t,p_2)\|\le C \|z_k(p_1)-z_k(p_2)\| $$
$$+ \| z_k(p_1)\| \mathop{sup}\{\|{{\pd F{z_j}}\over {\pd F{z_i}}}(t,p_1)-{{\pd F{z_j}}\over {\pd F{z_i}}}(t,p_2)\|, \|{{z_j}\over {z_k}}(p_1)-{{z_j}\over {z_k}}(p_2)\|\}.$$
Here we may assume that $ \| z_k(p_2)\|$ is the smaller of $ \| z_k(p_1)\|$, $ \| z_k(p_2)\|$.
So, if the local fields are not Lipschitz on $U_{i,k}$ with respect to the distance between points, then they are Lipschitz with respect the distance between planes or secant lines to the origin and in this case the Lipschitz constant goes to zero as one of the points goes to the origin.

\section{Genericity Theorem}

Although at present we can't give a complete proof that the $iL_{m_Y}$ condition is generic, we can do both conditions at once in some of the cases.  We first determine the different cases in which it is necessary to check the conditions. These cases are the different ways in which $J_z(F)_D$ can fail to have maximal rank.

\begin{Prop} The co-supports of $(m_YJ_z(F))_D$ or  $J_z(F)_D$ on $X\times_Y X$ consist of

1) $Y\times (0,0)$

2) $\Delta(X\times_Y X)$

3) $(0\times_Y X)\cup (X\times_Y 0)$

\end{Prop}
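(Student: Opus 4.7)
The plan is to identify the co-supports as the loci in $X\times_Y X$ where the matrix of generators of $J_z(F)_D$ (respectively $(m_YJ_z(F))_D$) in $\mathcal{O}^2_{X\times_Y X}$ drops below its generic rank $2$, and then carry out a point-by-point case analysis to show these loci coincide with the union of the three listed sets. The argument will be a family-parameter version of the computation in the earlier lemma relating $H_{2n-3}(J(f)_D)$ to $\overline{J(f)}_D$, with the ambient product $X\times X$ replaced by the relative product $X\times_Y X$.

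The inclusion of each listed set into the co-support is immediate: since $f_i=\pd F{z_i}$ vanishes along the singular locus $Y=\{0\}\times\mathbb{C}^k$ of the fibres, on $0\times_Y X$ the row of the matrix of generators indexed by the first factor is identically zero, and symmetrically on $X\times_Y 0$ the second row vanishes; on $\Delta(X\times_Y X)$ the two rows coincide; and on $Y\times(0,0)=\Delta Y$, the triple intersection, both rows vanish simultaneously. In each case the rank is at most $1$ (and $0$ on $\Delta Y$), so these points lie in the co-support. The same reasoning applies to $(m_YJ_z(F))_D$, whose generators $y_lf_i$ vanish on the same loci.

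For the reverse inclusion, at a point $(p_1,p_2)$ lying outside all three sets I would exhibit two linearly independent elements of the stalk, forcing rank $2$. The tool is the following ``doubling'' identity: for any $h\in I$ and any $g\in \mathcal{O}_X$ one has $gh\in I$ and
$$(gh)_D-(g\circ\pi_1)\cdot h_D=\bigl(0,\,(g\circ\pi_2-g\circ\pi_1)(h\circ\pi_2)\bigr)\in I_D,$$
and symmetrically $\bigl((g\circ\pi_1-g\circ\pi_2)(h\circ\pi_1),\,0\bigr)\in I_D$. Specialising to $g=z_i$ and $h=f_l$ (and to $h=y_lf_i$ in the $m_YJ_z(F)$ case) places $(z_i-z'_i)\cdot(0,f_l\circ\pi_2)$ and $(z_i-z'_i)\cdot(f_l\circ\pi_1,0)$ inside $J_z(F)_D$. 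Off $\Delta(X\times_Y X)$ some $z_i-z'_i$ is a unit at $(p_1,p_2)$, and off $(0\times_Y X)\cup(X\times_Y 0)$ we can choose $l,k$ with $f_l(p_1)\ne 0$ and $f_k(p_2)\ne 0$; dividing by the unit in the stalk yields $(f_l\circ\pi_1,0)$ and $(0,f_k\circ\pi_2)$ as two linearly independent elements, giving rank $2$. The main step is the doubling identity and its subsequent localisation at $(p_1,p_2)$; once it is in place the case analysis is routine. For $(m_YJ_z(F))_D$ one argues over $y\ne 0$, where some $y_l$ is a unit that can be absorbed, so the description is the one relevant to the genericity analysis of the next section.
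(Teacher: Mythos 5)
Your overall strategy --- take the co-support to be the locus where the evaluated matrix of generators drops below the generic rank $2$, use the doubling identity
$$(gh)_D-(g\circ\pi_1)\,h_D=\bigl(0,\;(g\circ\pi_2-g\circ\pi_1)(h\circ\pi_2)\bigr)$$
to manufacture elements of the form $(*,0)$ and $(0,*)$ once you are off the diagonal, and then conclude rank $2$ at any point off the three listed sets --- is exactly the mechanism the paper uses, both here and in the earlier lemma identifying $H_{2n-3}(J(f)_D)$ with $\overline{J(f)}_D$. You are in fact more explicit than the paper's own (very terse) proof about where the elements $(f_l\circ\pi_1,0)$ and $(0,f_k\circ\pi_2)$ come from, and you also record the easy forward inclusion, which the paper omits.

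The one genuine error is your reading of $m_Y$. In the setup of the paper $Y=0\times\mathbb{C}^k=\{z=0\}$, so $m_Y$, the ideal defining $Y$, is generated by the fibre coordinates $z_1,\dots,z_{n+1}$, not by the parameter coordinates $y_l$. Hence the generators of $(m_YJ_z(F))_D$ are the doubles of the $z_mf_j$ (and of their multiples), not of $y_lf_j$, and your closing sentence ``one argues over $y\ne 0$, where some $y_l$ is a unit'' is both unnecessary and insufficient: the proposition has to hold over all of $Y$, including $y=0$, and with your reading the module would vanish identically on the fibre over $y=0$, putting that entire fibre into the co-support and falsifying the statement exactly where it matters for the genericity argument. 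With the correct reading the repair is immediate and your computation goes through verbatim: off $(0\times_YX)\cup(X\times_Y0)$ some $z_m$ is a unit at $p_1$ and some $z_{m'}$ is a unit at $p_2$, so applying your doubling identity to $h=z_mf_j\in m_YJ_z(F)$ produces $(0,(z_{m'}f_k)\circ\pi_2)$ and $((z_mf_l)\circ\pi_1,0)$ with nonzero entries, giving rank $2$ at every point off the three sets, over every $y$.
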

\begin{proof}

Suppose $(x,x')$ does not lie in one of the sets. Then,  since some $z_i\circ p_1$ and some $z_j\circ p_2$ are not zero at $(x,x')$, $(m_YJ_z(F))_D=J_z(F)_D$ locally. Then $J_z(F)_D$ contains terms of the form $(0,\frac{\partial F}{\partial {z_j}}\circ p_2)$, $(\frac{\partial F}{\partial {z_j}}\circ p_1,0)$, which implies that the rank of $(m_YJ_z(F))_D$ is 2 and $(x, x') $ are not in the cossupport.

\end{proof}

The reader may have noted that $Y\times (0,0)$ is a subset of both $\Delta(X\times_Y X)$ and $(0\times_Y X)\cup (X\times_Y 0)$. We will next show that generically both conditions hold at points of 
$\Delta(X\times_Y X)-Y\times (0,0)$, and of $(0\times_Y X)\cup (X\times_Y 0)-Y\times (0,0)$. Since we are working on a Z-open set of $Y$, and we are working with families of isolated singularities, we may assume that the only singular point of $X_y$ is at $(y,0)$, that $(X-Y,Y)$ satisfies $W$ at $(y,0)$. We will show that checking the conditions at points of the form $(y,0,x)$, $x\ne 0$ amounts to checking $W$ at 
$(y,0)$ for $(X-Y,Y)$ , while checking the conditions at points of $\Delta(X\times_Y X)$, $x\ne 0$ is trivial. Thus it will suffice to look at components of the appropriate exceptional divisor that surject onto $Y\times (0,0)$.

\begin{Prop} In the set-up of this section, $iL_A$ and $iL_{m_Y}$ hold at all points of $\Delta(X\times_Y X)-Y\times (0,0)$, and both conditions hold at all points of $(0\times_Y X)\cup (X\times_Y 0)-Y\times (0,0)$ such that $(X-Y,Y)$ satisfies $W$ at $(y,0)$.
\end{Prop}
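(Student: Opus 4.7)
My plan is to treat the two types of off-origin points separately, reducing each case to a condition on $X$ that is either trivial or exactly the hypothesised $W$ condition.

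For a diagonal point $(y, z, z) \in \Delta(X \times_Y X)$ with $z \neq 0$, the underlying point $(y, z)$ lies outside $Y = S(X)$ and so is a smooth point of $X$: some $\partial F/\partial z_{i_0}$ is a unit in $\mathcal{O}_{X, (y, z)}$, so $J_z(F) = \mathcal{O}_X$ locally, and some $z_{k_0}$ is also a unit because $z \neq 0$, so $m_Y$, and hence $m_Y J_z(F)$, is the unit ideal as well. The Lipschitz saturation of the unit ideal is itself, so $J(F)_Y \subset (J_z(F))_S$ and $J(F)_Y \subset (m_Y J_z(F))_S$ hold trivially at $(y, z)$. Applying the equivalence between the two formulations of each $iL$ condition (in its family-relative version, valid by the remark at the end of Section 3) at the diagonal point $(y, z, z)$ then yields the doubled-module inclusions $(J(F)_Y)_D \subset \overline{(J_z(F))_D}$ and $(J(F)_Y)_D \subset \overline{(m_Y J_z(F))_D}$ there.

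For the axis case it suffices by symmetry to treat $(y, 0, z)$ with $z \neq 0$, and since $\overline{(m_Y J_z(F))_D} \subset \overline{(J_z(F))_D}$, the $iL_{m_Y}$ inclusion implies $iL_A$, so I focus on $iL_{m_Y}$. I verify this via the curve criterion: for an arbitrary $\phi = (\phi_1, \phi_2): (\mathbb{C}, 0) \to (X \times_Y X, (y, 0, z))$, I want to show that
$$\bigl((\partial F/\partial y_j) \circ \phi_1,\, (\partial F/\partial y_j) \circ \phi_2\bigr) \in \phi^*\bigl((m_Y J_z(F))_D\bigr) \cdot \mathcal{O}_1.$$
Choose indices $k_0, i_0$ so that $z_{k_0}$ and $(\partial F/\partial z_{i_0})(y, z)$ are both nonzero. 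The generator $g_{k_0, i_0, D} = (z_{k_0}\, \partial F/\partial z_{i_0})_D$, pulled back by $\phi$, has second component $\phi_2^*(z_{k_0}\, \partial F/\partial z_{i_0})$, a unit in $\mathcal{O}_1$. Rescaling this generator so its second component matches $(\partial F/\partial y_j) \circ \phi_2$ and subtracting from the target leaves a residue $(R(t), 0)$; by the $W$ condition applied to $\phi_1$ (which lands at $(y, 0)$), together with the fact that the subtracted term is also of the right form, $R$ lies in $\phi_1^*(m_Y J_z(F)) \cdot \mathcal{O}_1$.

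Completing the argument then reduces to showing that any $(R, 0)$ with $R \in \phi_1^*(m_Y J_z(F)) \cdot \mathcal{O}_1$ lies in $\phi^*((m_Y J_z(F))_D) \cdot \mathcal{O}_1$. Expanding $R = \sum_{k, i} \rho_{k, i}(t)\, \phi_1^*(z_k\, \partial F/\partial z_i)$ and writing the sought coefficients as $\rho_{k, i} + \delta_{k, i}$, the $\delta$'s are required to form a syzygy of the first-coordinate generators with prescribed second-coordinate pairing; the image of such syzygies under the second-coordinate pairing is the ideal in $\mathcal{O}_1$ generated by the $2 \times 2$ minors of the matrix with rows $\phi_1^*(z_k\, \partial F/\partial z_i)$ and $\phi_2^*(z_k\, \partial F/\partial z_i)$. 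The main obstacle, and the substantive content of the proof, is to verify that the required right-hand side lies in this minor ideal; this is accomplished by an order-of-vanishing estimate on the curve, exploiting that $\phi_1^*(z_k\, \partial F/\partial z_i)$ vanishes at $t = 0$ (since $(y, 0)$ is singular) while the $W$ condition forces $R$ to vanish to a compensating order, making the orders of both sides compatible with the minors.
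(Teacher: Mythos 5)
Your treatment of the diagonal points is correct and amounts to the same thing as the paper's: at $(y,z,z)$ with $z\neq 0$ both $J_z(F)$ and $m_YJ_z(F)$ are unit ideals, and whether you pass through the saturation/doubled-module equivalence (as you do) or check directly that the doubled module contains $I_\Delta\oplus I_\Delta$ and $(1,1)$ (as the paper does), the conclusion is immediate.

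The axis case, however, has a genuine gap, and it sits exactly where you locate ``the substantive content of the proof.'' You treat $\phi^*\bigl((m_YJ_z(F))_D\bigr)$ as generated by the pullbacks of the doubles $(z_k\,\partial F/\partial z_i)_D$ of the monomial generators alone, so that the elements $(R,0)$ available to you are the image of the syzygies of the second-coordinate row, i.e.\ essentially the $2\times 2$ minor ideal. But $I_D$ is by definition generated by $h_D$ for \emph{every} $h\in I$, not just for a generating set of $I$: concretely, $(z_jg)_D-(z_j\circ p_1)\,g_D=\bigl(0,\,(z_j\circ p_2-z_j\circ p_1)(g\circ p_2)\bigr)$ is a generator your matrix does not see. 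Without it the reduction fails: writing $a_{k,i}=(z_k\,\partial F/\partial z_i)\circ\phi_1$ and $b_{k,i}=(z_k\,\partial F/\partial z_i)\circ\phi_2$, take a test curve whose component $\phi_1$ lifts to $B_{m_YJ_z(F)}(X)$ with limit point $[\cdots:(z_k\,\partial F/\partial z_i)(y,z):\cdots]$; then every minor $a_{k,i}b_{k_0,i_0}-a_{k_0,i_0}b_{k,i}$ vanishes to order strictly greater than $\mathrm{ord}\,\phi_1^*(m_YJ_z(F))$, while your residue $R$ has order exactly $\mathrm{ord}\,\phi_1^*(m_YJ_z(F))$ unless $\lim\,(\partial F/\partial y_j)\circ\phi_1/a_{k_0,i_0}$ happens to equal the value $(\partial F/\partial y_j)(y,z)/(z_{k_0}\,\partial F/\partial z_{i_0})(y,z)$ at the far-away smooth point---which it has no reason to do. So the order-of-vanishing estimate cannot close the argument with the generators you allow yourself. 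The extra generators are the fix, and they make the whole curve-by-curve syzygy analysis unnecessary: at an off-diagonal axis point some $z_j\circ p_1-z_j\circ p_2$ and some $(z_k\,\partial F/\partial z_i)$ evaluated on the smooth factor are units, so the displayed element exhibits $(1,0)$ (or $(0,1)$) in the module germ $(m_YJ_z(F))_D$ itself. This is the paper's route: once $(1,0)$ is in the module, the doubled condition collapses to $\partial F/\partial y_j\in\overline{m_YJ_z(F)}$ at $(y,0)$, which is precisely the hypothesis $W$.
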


\begin{proof} Work at  $(y,x,x)$, $x\ne 0$. Then since $x\ne 0$, $(m_YJ_z(F))_D=J_z(F)_D$ locally. Since $f_y$ is a submersion at $x$, and $J_z(F)_D$ contains elements of the form
$(0, (z_i\circ p_1-z_i\circ p_2) (\frac{\partial F}{\partial {z_j}}\circ p_2))$, $((z_i\circ p_1-z_i\circ p_2) (\frac{\partial F}{\partial {z_j}}\circ p_1),0)$, it follows that $J_z(F)_D$ contains $I_{\Delta}\cO^2_{X\times_Y X,(x,x)}$. By adding elements of the form $(0, \frac{\partial F}{\partial {y}}\circ p_1- \frac{\partial F}{\partial {y}}\circ p_2)$ to $( \frac{\partial F}{\partial {y}}\circ p_1, \frac{\partial F}{\partial {y}}\circ p_2)$   and elements of the form $(0, \frac{\partial F}{\partial {z_j}}\circ p_1-\frac{\partial F}{\partial {z_j}}\circ p_2)$ to $( \frac{\partial F}{\partial {z_j}}\circ p_1, \frac{\partial F}{\partial {z_j}}\circ p_2)$, this part of the proof is finished  since $ \frac{\partial F}{\partial {y}}$ is in the ideal $J_z(F)$ at $x$ since $f_y$ is a submersion.

Now work at $(x,0)$, $x\ne 0$. Since $f_y $ is a submersion at $x$, and $x\ne 0$ it follows that $(m_Y J_z(F))_D$ contains elements of the form $(1,0)$, so it suffices to show that $\frac{\partial F}{\partial {y}}$ is in the integral closure of $m_Y J_z(F))$ and this is equivalent to $W$. This ends the second part of the proof.

\end{proof}

\begin{theorem} In the set-up of this section, there exists a Zariski open subset of $U$ of $Y$ such that $iL_A$ holds for the pair $(X-Y, U\cap Y)$ along $Y$.
\end{theorem}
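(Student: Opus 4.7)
The plan is to model the argument on Teissier's proof of the genericity of condition C, transposed to the module-on-$X\times_Y X$ setting provided by the integral-closure reformulation of $iL_A$. First I would reduce the points at which the condition must be checked. By Proposition~4.1 the co-support of $J_z(F)_D$ in $X\times_Y X$ consists of three closed strata, and Proposition~4.2 shows that on two of them (the pieces away from $Y\times\{(0,0)\}$) the condition $iL_A$ holds automatically once $(X-Y,Y)$ satisfies $W$ at the base point. Since Teissier has already shown that $W$ holds on a Zariski open subset of $Y$, we may restrict to that open subset from the outset and reduce the problem to verifying $(J(F)_Y)_D\subset\overline{J_z(F)_D}$ at points of the exceptional divisor of $B_{J_z(F)_D}(X\times_Y X)$ that project to $Y\times\{(0,0)\}$.

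Second, I would encode the $iL_A$ condition numerically by means of multiplicities of pairs. Set $M=J_z(F)_D$ and $N=J_z(F)_D+(J(F)_Y)_D$, viewed as submodules of the free module of rank two. Since condition $A_F$ is generic for $(X-Y,Y)$, after shrinking $Y$ we may assume $J(F)_Y\subset\overline{J_z(F)}$, and then by the argument of Lemma~3.4 applied to $N$ we obtain $N\subset\overline{M}$ with $\overline{M}=\overline{J_z(F)}_D$. By the additivity principle of Kleiman--Thorup and the Corollary of Section~3, the $iL_A$ condition at $y$ is equivalent to the multiplicity equality
$$e(M,\overline{M})(y)=e(N,\overline{M})(y).$$
Thus the genericity statement is reduced to the claim that this equality holds on a Zariski open subset of $Y$.

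Third, I would apply the Multiplicity Polar Theorem (Theorem~2.2) to the two pairs $(M,\overline M)$ and $(N,\overline M)$ on the family $\mathcal{X}=X\times_Y X$ over $Y$. The theorem converts the variation of each multiplicity from a generic $y\in Y$ to the base point $0$ into a difference of multiplicities of polar varieties of $M$ (respectively $N$) at $y$. These polar multiplicities, being multiplicities of the images in $Y$ of intersections of $\Projan\mathcal{R}(M)$ (resp.\ $\Projan\mathcal{R}(N)$) with generic linear spaces, are upper semi-continuous functions on $Y$ and hence constant on a Zariski open subset $U\subset Y$. On $U$ the differences $\Delta(e(M,\overline{M}))$ and $\Delta(e(N,\overline{M}))$ are zero, and since at a generic point of $Y$ (where the fibre is a generic deformation of an isolated hypersurface singularity) the two multiplicities agree, the equality propagates to every $y\in U$. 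Applying the Corollary of Section~3 at each such $y$ gives $\overline{M(y)}=\overline{N(y)}$, i.e.\ $iL_A$ at $y$.

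The main obstacle is verifying the hypotheses of the Multiplicity Polar Theorem in this setting: one must check that the locus $C$ where $\overline{M}=\overline{N}$ fails is of dimension $k$ and finite over $Y$, and in particular that the exceptional fibre $C(\Projan\mathcal{R}(M))(0)$ coincides with $C(\Projan\mathcal{R}(M(0)))$ away from points projecting to the origin of $\mathcal{X}(0)$. The delicacy comes from the doubled geometry on $X\times_Y X$: one must control the behaviour of $\Projan\mathcal{R}(J_z(F)_D)$ along the three strata described in Proposition~4.1 as $y$ moves, and in particular ensure that no new components of the exceptional set appear in the special fibre over and above those in the generic fibre. This is the analogue of the most technical part of Teissier's argument in \cite{T1} and is where the assumption of an isolated singularity on each fibre is crucial, since it forces the bad locus to be concentrated at $(y,0,0)$.
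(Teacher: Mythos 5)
Your first reduction step is exactly the paper's: Propositions~4.1 and~4.2 dispose of the points of the co-support of $J_z(F)_D$ away from $Y\times\{(0,0)\}$, leaving only the components of the exceptional divisor lying over $Y\times\{(0,0)\}$. From that point on, however, your argument has a genuine gap, and it is located precisely where you write that ``at a generic point of $Y$ \dots\ the two multiplicities agree, the equality propagates to every $y\in U$.'' The equality $e(M,\overline M)(y)=e(N,\overline M)(y)$ at a generic $y$ \emph{is} the statement $iL_A$ at a generic $y$ (via the Corollary of Section~3), i.e.\ it is the theorem you are trying to prove. Nothing about a generic $y$ makes it automatic: $Y=S(X)$, so the fibre over a generic $y$ is still an isolated hypersurface singularity, and there is no a priori reason why $(J(F)_Y)_D$ restricted near such a point should lie in $\overline{J_z(F)_D}$. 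The Multiplicity Polar Theorem and upper semicontinuity can only \emph{transport} the equality from one point of $Y$ to another (constancy of the invariant on a Zariski open set); they cannot manufacture the base case. Indeed the paper's own logic runs in the opposite direction: Theorem~4.3 is proved first by a direct argument, and only afterwards (Theorem~4.6) is the Multiplicity Polar Theorem invoked, using the already-established genericity as an input (``we know that generically the $\partial G/\partial a_i$ are in $\overline M$''). Your proposal, as written, is circular.

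What the paper does instead, and what your proof is missing, is the direct ``idealistic Bertini'' computation modeled on Teissier. One passes to the normalized blow-up $N$ of $X\times_Y X\times\mathbb P^1$ by the ideal induced from $J_z(F)_D$, and at a generic point $q$ of each component of the exceptional divisor surjecting onto $Y$ chooses coordinates $(y',u',x')$ with $y'=y\circ p$ and $u'$ a reduced local equation of the divisor, so that crucially $\partial u'/\partial y'=0$. Differentiating the identity $F\circ p_1+sF\circ p_2\equiv 0$ with respect to $y'$ then expresses $\frac{\partial F}{\partial y}\circ\pi_1+s\,\frac{\partial F}{\partial y}\circ\pi_2$ as a combination of the $\frac{\partial F}{\partial z_i}\circ\pi_j$ with coefficients $\frac{\partial(z_i\circ\pi_j)}{\partial y'}$, whose orders of vanishing along $u'=0$ are controlled because differentiation in $y'$ does not lower the order in $u'$; a curve test transverse to the component then shows the pullback of $(\partial F/\partial y)_D$ vanishes to the order of the (locally principal) pullback of $J_z(F)_D$. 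This order-of-vanishing computation is the actual content of the genericity theorem, and no multiplicity bookkeeping substitutes for it.
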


\begin{proof}
We will follow the lines of the proof of the Idealistic Bertini Theorem given in \cite{T1} p591-598. We prove that the $il_A$ condition is generic using the module criterion. We will work on the normalized blow-up of $X\times_Y X\times \Bbb P^1$ by the ideal sheaf induced from the submodule $J_z(F)_D$, denoting $NB_{(J_z(F))_D}( X\times_Y X\times \Bbb P^1$) by $N$. We need to check that on each component of the exceptional divisor that the pullback of the element induced from $ (\frac{\partial F}{\partial {y}})_D$ to the normalized blowup is in the pullback of $(J_z(F))_D$. Denote the projection  to $Y$ by $p$. By the previous lemmas we need only consider those components of the exceptional divisor which project to $Y$ under the map to $X\times_Y X$. Since we are working over a Zariski open subset of $Y$ we may assume that every such component maps surjectively onto $Y$. Since we are working on the normalization, we can work at a point $q$ of the exceptional divisor such that $E$ is smooth at  $q$, $N$ is smooth at $q$ and the projection to $Y$ is a submersion at $q$.  Thus, we can choose coordinates at $q$, $(y',u', x')$, such that $y'=y\circ p$, and $u'$ defines $E$ locally with reduced structure. The key point is that $ \frac{\partial u'}{\partial {y'}}=0$.

Let $\pi_i$ denote the composition of $\pi$, the projection from $N$ to $ X\times_Y X\times \Bbb P^1$ with the projection $p_i$ to the $i$-th factor of $X\times_Y X\times \Bbb P^1$, $i=1, 2$.

We have that $F\circ p_1+sF\circ p_2$ is identically zero on $ X\times_Y X\times \Bbb P^1$. Pull this back to $N$ by $\pi$ and take the partial derivative with respect to $y'$ at $q$. We get by the chain rule:

$$0=\frac{\partial F}{\partial {y}}\circ \pi_1+s\frac{\partial F}{\partial {y}}\circ \pi_2 + \sum\limits_{i=1}^{n}\frac{\partial F}{\partial {z_i}}\circ \pi_1 \frac{\partial {z_i\circ \pi_1}}{\partial {y'}}+s\frac{\partial F}{\partial {z_i}}\circ \pi_2 \frac{\partial {z_i\circ \pi_2}}{\partial {y'}}.$$

Notice that there is no term involving the derivative of $s$. This is because the coefficient of this partial by the product rule would be zero, since $F\circ \pi_i=0$.

Now we work to re-shape the above term to prove the theorem. Notice that since $z_i$ all vanish along $Y$, $z_i\circ \pi_j$ all vanish along $E$ at $q$.
We can assume the order of vanishing of $z_1\circ \pi_j$ is  minimal among $\{z_i\circ\pi_j\}$, and that the strict transforms of $z_1\circ \pi_j$ do not pass through $q$.

We have 

$$\frac{\partial F}{\partial {y}}\circ \pi_1+s\frac{\partial F}{\partial {y}}\circ \pi_2 =$$
$$-(\sum\limits_{i=1}^{n}(\frac{\partial F}{\partial {z_i}}\circ \pi_1) 
 (\frac{\partial {z_i\circ \pi_1}}{\partial {y'}})+s((\frac{\partial F}{\partial {z_i}}\circ \pi_2) (\frac{\partial {z_i\circ \pi_1}}{\partial {y'}})$$
$$-(\frac{\partial F}{\partial {z_i}}\circ \pi_2) \left[ \frac{\partial {z_i\circ \pi_1}}{\partial {y'}}-\frac{\partial {z_i\circ \pi_2}}{\partial {y'}}\right])).$$

We want to show that the terms on the right hand side in the above expression are in the ideal generated by the pullback of the ideal sheaf on $ X\times_Y X\times \Bbb P^1$ induced by $J_z(F))_D$. For this we use the curve criterion. We use a test curve to show that the order of vanishing of $\frac{\partial F}{\partial {y}}\circ \pi_1+s\frac{\partial F}{\partial {y}}\circ \pi_2$ along a component is same as the order of vanishing of the ideal $(J_z(F))_D$. This will imply that  $\frac{\partial F}{\partial {y}}\circ \pi_1+s\frac{\partial F}{\partial {y}}\circ \pi_2$ is in the ideal along the component.  We can choose a curve $\tilde{\Phi}$ such that $\tilde{\Phi}$ is the lift of a curve $\Phi=(\psi, \phi_1, \phi_2)$, $\Phi:{\Bbb C}:\to {\Bbb P}^1\times X\times_YX$. Further $\tilde\Phi(0)$ is a smooth point of the component and the ambient space, $\tilde\Phi$ transverse to the component so that $u'\circ \tilde\Phi=t$, where $t$ is a coordinate in the local ring of $\Bbb C$ at the origin. This implies that if an ideal is generated by  $u'^p$, that the pullback is generated by $t^p$. Since the pullback of the ideal $(J_z(F))_D$ is locally principal, we can choose $\tilde\Phi(0)$ so that $(J_z(F))_D$ is generated by a power of $u'$.

Then we have $$\tilde\Phi^*(\frac{\partial F}{\partial {y}}\circ \pi_1+s\frac{\partial F}{\partial {y}}\circ \pi_2)=$$

$$-(\sum\limits_{i=1}^{n}(\frac{\partial F}{\partial {z_i}}\circ \pi_1\circ \tilde\phi_1) 
 (\frac{\partial {z_i\circ \pi_1}}{\partial {y'}})\circ \tilde\phi_1+\psi_2/\psi_1((\frac{\partial F}{\partial {z_i}}\circ \pi_2)\circ \tilde\phi_2 (\frac{\partial {z_i\circ \pi_1}}{\partial {y'}})\circ \tilde\phi_1$$
$$-(\frac{\partial F}{\partial {z_i}}\circ \pi_2)\circ \tilde\phi_2 \left[ \frac{\partial {z_i\circ \pi_1}}{\partial {y'}}\circ \tilde\phi_1-
\frac{\partial {z_i\circ \pi_2}}{\partial {y'}}\tilde\phi_2\right])).$$

The right hand side will clearly be in the ideal ${\Phi}^*(J_z(F))_D)$, provided the pullback of $(\frac{\partial F}{\partial {z_i}}\circ \pi_2) ( \frac{\partial {z_i\circ \pi_1}}{\partial {y'}}-
\frac{\partial {z_i\circ \pi_2}}{\partial {y'}})$ is. However, by construction, since $y'$ and $u'$ are independent coordinates,  the order of 
$\frac{\partial {z_i\circ \pi_1}}{\partial {y'}}-\frac{\partial {z_i\circ \pi_2}}{\partial {y'}}$ in $u'$ will be the same as the order of $z_i\circ \pi_1 -z_i\circ \pi_2$. Hence the pullback of $(\frac{\partial F}{\partial {z_i}}\circ \pi_2) ( \frac{\partial {z_i\circ \pi_1}}{\partial {y'}}-
\frac{\partial {z_i\circ \pi_2}}{\partial {y'}})$ does vanish to the desired order in $t$, which finishes the proof.
\end{proof}

We describe an application of this result. Given $X$ an isolated hypersurface singularity we can consider the sections of $X$ by hyperplanes. It is natural to ask if there is a generic set of hyperplanes for which the associated family of hyperplane sections satisfies the $iL_A$ condition. We will show this is true after recalling the ideas necessary to make precise statements. (For more details on this material see \cite{G-1}.) We first need the notion of the Grassman modification of $X$, which we describe in the hyperplane case. Let $E_{n-1}$ denote the canonical bundle over ${\Bbb{P}}^{n-1}$, which we view as hyperplanes though the origin in ${\Bbb C}^n$. Denote the projection
of $E_{n-1}$ to ${\Bbb C}^n$ by $\beta_{n-1}$. If $X^{n-1}$ is a subset of ${\Bbb C}^n$, we call $\tilde X ={\beta_{n-1}}^{-1}(X)$, the $ G_{n-1}$ modification
of $X$. In this paper we will simply refer to the $G_{n-1}$ modification as the Grassman modification of $X^{n-1}$. Note that ${\Bbb{P}}^{n-1}$ is embedded in $E_{n-1}$ as the  zero section of $E_{n-1}$. This means that we 
can think of $0\times {\Bbb{P}}^{n-1}$ as a stratum of 
 $\tilde X$; note that the projection to $0\times{\Bbb{P}}^{n-1}$
makes $\tilde X$ a family of analytic sets with $0\times {\Bbb{P}}^{n-1}$ as the parameter space which we denote by $Y$. The members of 
this family are just
$\{P\cap X\}$ as $P$ varies through the points of ${\Bbb{P}}^{n-1}$. 

The set of hyperplanes which are limiting tangent planes to $X$ at the origin form a Zariski closed set. It is known that on the complement of this set, $(\widetilde X-Y,Y)$ are a pair of strata which satisfy the Whitney conditions. We can now apply Theorem 4.3 to this situation. 

\begin{theorem} Suppose $X^n,0$ is the germ of an analytic hypersurface in $\Bbb{C}^n$, then there exists a Zariski open subset $U$ of $\Bbb{P}^{n-1}$, such that condition $iL_A$ holds for the pair $\widetilde X-U,U$ along $U$.\end{theorem}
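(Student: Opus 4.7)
The plan is to reduce the theorem to a direct application of Theorem 4.3 by interpreting the Grassman modification $\widetilde{X}\to Y$ as a family of isolated hypersurface singularities over the smooth parameter space $Y=0\times \mathbb{P}^{n-1}$. The key preliminary step is to restrict to the Zariski open complement $V\subset \mathbb{P}^{n-1}$ of the set of limit tangent hyperplanes to $X$ at $0$; over $V$ the excerpt records that $(\widetilde{X}-Y,Y)$ satisfies the Whitney conditions, which in particular forces $\widetilde{X}-Y$ to be smooth and hence $Y=S(\widetilde{X})$ locally along the zero section of $E_{n-1}|_V$. Moreover, for $[P]\in V$ the hyperplane $P$ meets $X$ transversally outside of the origin, so each fiber $P\cap X$ carries only an isolated singularity at the origin.

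Next I would set up the local picture required by Theorem 4.3. Choose an affine chart around any point $[P_0]\in V$ with coordinates $y=(y_1,\dots, y_{n-1})$ together with a local trivialization of the canonical bundle $E_{n-1}$ over this chart, so that $E_{n-1}$ becomes a product with fiber coordinates $z$ and the zero section is $0\times \mathbb{C}^{n-1}=Y$. The map $\beta_{n-1}$ is linear in $z$ through a smoothly varying matrix $M(y)$, so pulling back the defining equation of $X$ gives a local defining equation $F(z,y)$ of $\widetilde{X}$ as a hypersurface inside the smooth ambient space $E_{n-1}$. Together with the preceding paragraph this places us exactly in the Setup that precedes the definitions of $iL_{m_Y}$ and $iL_A$ in Section 3.

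With the hypotheses verified, I would apply Theorem 4.3 at each $[P_0]\in V$ to obtain a Zariski open neighborhood $U_{[P_0]}\subset Y$ over which $iL_A$ holds along $Y$. The union of these local Zariski opens yields the desired Zariski open subset $U\subset \mathbb{P}^{n-1}$. The main obstacle is a verification rather than a genuinely hard step: one must ensure that the $iL_A$ condition is invariant under biholomorphic change of local coordinates compatible with $Y$, so that the locally obtained Zariski opens can be glued, and that the ambient space $E_{n-1}$ being a bundle rather than affine space does not impair the applicability of Theorem 4.3. Both points follow from the formulation of $iL_A$ via the integral closure of the submodule $(J_z(F))_D$ on $\widetilde{X}\times_Y \widetilde{X}$, which is an intrinsic construction depending only on the pair $(\widetilde{X},Y)$ and not on the chosen trivialization.
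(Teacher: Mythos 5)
Your proposal is correct and follows essentially the same route as the paper: the paper's proof simply views $\widetilde X$ locally as a family of hypersurfaces with isolated singularities parameterized by $\Bbb{P}^{n-1}$, with fiber $P\cap X$ over $P$, and invokes Theorem 4.3. Your additional verifications (restricting to the complement of the limiting tangent hyperplanes, trivializing $E_{n-1}$ to obtain the local defining equation $F(z,y)$, and noting the coordinate-independence of the module formulation) are exactly the details the paper leaves implicit.
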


\begin{proof} We can view $\widetilde X$ locally as a family of hypersurfaces parameterized by $\Bbb{P}^{n-1}$. The fiber of the family over the plane $P$ is just the intersection $P\cap X$. The existence of $U$ follows from 4.3.\end{proof}

We can use the ideas of \cite{G-1} to describe these generic hyperplanes. We work in the chart $U_n$ given by planes $P$ with equation $z_n=\mathop{\sum}\limits_i a_{i} z_i$. Then we have local coordinates on  $E_{n-1}$ given by 
 $(z_1,...z_n,a_{1},...,a_{n-1})$. In these coordinates
we have $$\beta (z_1,...z_n,a_{1},...,a_{n-1})=(z_1,...z_n,\mathop{\sum}\limits_i a_{i} z_i)$$
If $\phi :{{\Bbb C}},0 \rightarrow \tilde X,P\times \{0\}$, then $\beta\circ\phi$ is tangent to P at the origin. If $\phi :{{\Bbb C}},0 \rightarrow  X, 0$ is tangent to $P$ at $0$, then $\phi$ lifts to $\tilde X,P\times \{0\}$, and we say $\phi$ is liftable. It follows from \cite{G-1}, that since 
  F defines $X$, $G:=F\circ \beta$ defines $\tilde X$. From the chain rule we note that 
$$\pd G{a_{i}}=z_i\pd F{z_n}\circ\beta,\hskip 2em
J_z(G)=(\pd F{z_j}\circ\beta+\mathop{\sum}\limits_ia_{i}\pd F{z_n}\circ\beta), 1\le j\le n-1.$$

\begin{cor}  Suppose $X^n,0$ is the germ of an analytic hypersurface in $\Bbb{C}^n$, then, for $P\in U_n$, $P$ is a point in the Z-open set of the last theorem, if and only if $z_i\pd F{z_n}\circ\beta\in (J_z(G))_S$ for $1\le i\le n-1$ at $P,0$. 

\end{cor}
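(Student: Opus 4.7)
The plan is to translate the abstract $iL_A$ condition for the family $\widetilde X\to Y$ into the explicit form asserted in the corollary using the chain-rule formulas for $J(G)_Y$ and $J_z(G)$ recorded just above the statement, and then invoke Theorem~4.5.

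Working in the chart $U_n$ with coordinates $(z_1,\dots,z_{n-1},a_1,\dots,a_{n-1})$ on $E_{n-1}$, the parameter stratum is $Y\cap U_n=\{z_1=\dots=z_{n-1}=0\}$, parametrized by the $a_i$; therefore $J(G)_Y$ is the ideal generated by the partials of $G$ with respect to the $a_i$. The chain-rule formula $\pd G{a_i}=z_i\pd F{z_n}\circ\beta$ gives
\[
J(G)_Y = \bigl(z_1\tfrac{\partial F}{\partial z_n}\circ\beta,\,\dots,\,z_{n-1}\tfrac{\partial F}{\partial z_n}\circ\beta\bigr).
\]
Consequently the $iL_A$ condition $J(G)_Y\subset (J_z(G))_S$ for $(\widetilde X,Y)$ at the point $(P,0)$ is, letter for letter, the family of containments $z_i\pd F{z_n}\circ\beta\in (J_z(G))_S$ for $1\le i\le n-1$.

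To invoke Theorem~4.5 I would first restrict to the Zariski open subset of $\Bbb P^{n-1}$ consisting of those $P$ that are not limiting tangent hyperplanes to $X$ at $0$; on this set $\widetilde X\to Y$ is a family of isolated hypersurface singularities whose singular locus is exactly $Y$, so the setup preceding Theorem~4.3 applies to $(\widetilde X,Y)$. Theorem~4.5 then produces a Zariski open $U\subset\Bbb P^{n-1}$ along which $iL_A$ holds, and by the translation of the previous paragraph membership in $U$ is equivalent to the listed inclusions $z_i\pd F{z_n}\circ\beta\in (J_z(G))_S$. The only item beyond bookkeeping is checking that the Grassmann modification is indeed a family of isolated singularities away from the limiting tangent locus, after which the equivalence is simply a re-reading of the definition of $iL_A$ through the chain-rule identities; there is no substantive obstacle.
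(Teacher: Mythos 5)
Your proposal is correct and is essentially the paper's own (one-line) argument: the corollary is just the observation that, via the chain-rule identity $\pd G{a_{i}}=z_i\pd F{z_n}\circ\beta$, the stated inclusions are letter-for-letter the $iL_A$ condition $J(G)_Y\subset (J_z(G))_S$ for the pair $(\widetilde X-Y,Y)$ at $(P,0)$, which is what the genericity theorem for the Grassmann modification provides. The extra care you take about restricting away from limiting tangent hyperplanes matches the discussion the paper gives just before its Theorem 4.4.
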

\begin{proof} In the framework of the corollary, the condition of the corollary is exactly the $iL_A$ condition. \end{proof}

The corollary says that to check a plane is generic, it suffices to check that for all curves $\phi_i$ $i=1,2$ on $X$, tangent to $P$ at the origin, with lifts $\widetilde \phi_i$ for $\phi_i$, and $\Phi:= ( \phi_1,  \phi_2)$, $\widetilde\Phi:= (\widetilde \phi_1, \widetilde \phi_2)$, that $$(z_i\pd F{z_n})_D\circ \Phi\in ((\pd F{z_j})_D\circ\Phi+(\mathop{\sum}\limits_ia_{i}\pd F{z_n}\circ\beta)_D \circ \widetilde \Phi).$$
\vspace{.1 in}

We will give a description using analytic invariants of these generic hyperplanes.  For the rest of this section we will assume that the planes we consider are not limiting tangent hyperplanes to $X,0$. This condition is equivalent to ${\overline{J(F)_H}}={\overline {J(F)}}$ in $\cO_{X,0}$. 

The invariant we will use appeared earlier in section 3. It is the multiplicity of the pair $J(X\cap H)_D,{\overline {J(X\cap H)}}_D$, which we denote $e(J(X\cap H)_D,{\overline {J(X\cap H)}}_D)$.

Similar invariants have been used in this setting before. In the case of ICIS singularities, to test for whether or not a hyperplane is in the generic set of planes for which the hyperplane sections form a Whitney equisingular family, you use the multiplicity of the pair $(JM(X\cap H), \cO^p_{X})$, which is $e(JM(X\cap H))$. The plane is generic if this multiplicity is minimal, and the minimal number is the sum of the Milnor numbers of $X\cap H$, and $X\cap H\cap G$, where $H$ and $G$ are generic hyperplanes. 

The proof that the minimal value of $e(J(X\cap H)_D,{\overline {J(X\cap H)}}_D)$ again identifies generic hyperplanes will be done in the context of the multiplicity polar theorem, so we identify the modules we will use. 

We will work in $\widetilde X\times_{\Bbb P^{n-1}}\widetilde X\subset X\times {\Bbb P^{n-1}}\times X$. The module $N$ will be $(\beta^*{\overline {J(F)})}_D$, and the module $M$ will be $J_z(G)_D$. Notice that $M$ restricted to the fiber of the family over the plane $H$ is just $J(X\cap H)_D$, while $N$ restricted to $H$ is $(\overline {J(X)}|_H)_D$; because we are assuming $H$ is not a limiting tangent hyperplane, we have that $\overline {J(X)}|_H=\overline{J(X\cap H)}$, hence  $N$ restricted to $H$ is $\overline{J(X\cap H)}_D$, so the multiplicity of the pair $M(H), N(H)$ is the same as $e(J(X\cap H)_D,{\overline {J(X\cap H)}}_D)$. At this time we do not have a geometric interpretation of this number.

\begin{theorem} Suppose $X^{n-1},0$ is an isolated singularity hypersurface and $U$ the set of hyperplanes which are limiting tangent hyperplanes to $X$ at $0$. Then 

1) $e(J(X\cap H)_D,{\overline {J(X\cap H)}}_D)$ is upper semicontinuous on $U$.

2) The $iL_A$ condition holds along $U$ at a hyperplane $H$ for which the value of $e(J(X\cap H)_D,{\overline {J(X\cap H)}}_D)$ is minimal.
\end {theorem}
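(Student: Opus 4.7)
The plan is to adapt Teissier's strategy from~\cite{T1}, now using the multiplicity polar theorem (Theorem 2.2), additivity of the multiplicity of pairs of modules, and the genericity Theorem 4.4 just proved. I work on $\tilde X\times_{Y}\tilde X$ with $Y=\Bbb P^{n-1}$, with the two modules introduced just before the statement: $M:=J_z(G)_D$ (whose fiber over a plane $H$ is $J(X\cap H)_D$) and $N:=(\beta^{*}\overline{J(F)})_D$ (whose fiber over a non-limiting plane $H$ is $\overline{J(X\cap H)}_D$). The key auxiliary object is the intermediate module
\[
M'\;:=\;M+(J(G)_Y)_D\;=\;(J_z(G)+J(G)_Y)_D,
\]
which sits in the chain $M\subset M'\subset N$: indeed $J(G)_Y$ is generated by the products $z_i\,\pd F{z_n}\!\circ\beta\in \beta^{*}J(F)\subset \beta^{*}\overline{J(F)}$. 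By Corollary 3.5 (applied fiberwise), the $iL_A$ condition at a non-limiting $H$ is equivalent to $J_z(G)_S=(J_z(G)+J(G)_Y)_S$, i.e., to $e(M(H),M'(H))=0$.

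For part~(1), the analysis of Proposition 4.1 adapted to the family $\tilde X$ shows that the co-supports of $\overline M$ and $\overline N$ on $\tilde X\times_Y\tilde X$ agree off a set finite over $Y$ (the components enumerated there all project finitely to $Y$). Hence Theorem 2.2 applies uniformly in a neighborhood of any non-limiting $H_0$ to the pair $(M,N)$ and gives
\[
\Delta\bigl(e(M,N)\bigr)=\mathrm{mult}_{H_0}\Gamma_d(M)-\mathrm{mult}_{H_0}\Gamma_d(N)\;\ge\;0,
\]
the non-negativity being the standard polar-multiplicity inequality for nested modules of the same generic rank. This is precisely the upper semicontinuity of $H\mapsto e(M(H),N(H))$. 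The identical argument applied to $(M',N)$ yields the auxiliary semicontinuity of $H\mapsto e(M'(H),N(H))$ needed below.

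For part~(2), let $H_0$ be a non-limiting hyperplane minimizing $e(M(H),N(H))$, and let $H'$ be a nearby generic non-limiting hyperplane. Additivity of the multiplicity of pairs along $M\subset M'\subset N$ gives
\[
e(M(H),N(H))\;=\;e(M(H),M'(H))\;+\;e(M'(H),N(H)).
\]
By Theorem 4.4 applied to the family $\tilde X\to Y$, the $iL_A$ condition holds on a Zariski open subset of $Y$ which I may take to contain $H'$; hence $e(M(H'),M'(H'))=0$, and so $e(M(H'),N(H'))=e(M'(H'),N(H'))$. Combining minimality of $e(M,N)$ at $H_0$ with the semicontinuity of $e(M',N)$,
\[
e(M(H_0),M'(H_0))+e(M'(H_0),N(H_0))\;\le\;e(M(H'),N(H'))\;=\;e(M'(H'),N(H'))\;\le\;e(M'(H_0),N(H_0)),
\]
forcing $e(M(H_0),M'(H_0))=0$, which by Corollary 3.5 gives $iL_A$ at $H_0$.

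The step I expect to be most delicate is the uniform application of Theorem 2.2 to the pair $(M',N)$ at a non-generic $H_0$. The co-support of the integral hull of $M'$ can a priori pick up extra components from the vanishing of the factor $\pd F{z_n}\!\circ\beta$ in the generators of $J(G)_Y$, and one must confirm that these still project finitely to $Y$ in a neighborhood of $H_0$. The hypothesis that $H_0$ is not a limiting tangent hyperplane (so that $\overline{J(F)}|_{H_0}=\overline{J(X\cap H_0)}$) should be exactly what confines the extra co-support to the zero-section, legitimizing both the MPT application and the fiber interpretation $e(M'(H),N(H))$ used throughout.
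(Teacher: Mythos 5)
Your part (1) is essentially the paper's argument: apply the multiplicity polar theorem to the pair $(M,N)=(J_z(G)_D,(\beta^*\overline{J(F)})_D)$. One quibble: the non-negativity of $\Delta(e(M,N))$ should not be attributed to a general monotonicity of polar multiplicities for nested modules --- no such inequality is available here. The correct reason, and the one the paper uses, is that $N$ is pulled back from $X\times X$ and hence is independent of the parameter $H$, so $\Gamma_d(N)$ is empty and $\Delta(e(M,N))=\mathrm{mult}_H\Gamma_d(M)\ge 0$.

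Part (2) has a genuine gap. Your chain of inequalities correctly yields $e(M(H_0),M'(H_0))=0$, hence $\overline{M(H_0)}=\overline{M'(H_0)}$ as submodules of $\cO^2$ on $(X\cap H_0)\times(X\cap H_0)$; by Corollary 3.5 this says that the restrictions of the $\pd G{a_i}$ to $X\cap H_0$ lie in the Lipschitz saturation of $J(X\cap H_0)$. That is a statement about the single fiber. The $iL_A$ condition at $H_0$ is the containment $(J(G)_Y)_D\subset\overline{(J_z(G))_D}$ in the local ring of the total space $\widetilde X\times_Y\widetilde X$ at $(H_0,0,0)$, which is tested on curves that move in the ${\Bbb P}^{n-1}$-direction as well. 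Fiberwise integral dependence at the special parameter value does not imply integral dependence on the total space: that implication is exactly the content of the principle of specialization of integral dependence, whose hypothesis is the constancy of the multiplicity. You do have that constancy in hand (minimality plus part (1) force $e(M(H),N(H))$ to be constant near $H_0$), but you must then convert it, via the multiplicity polar theorem, into the emptiness of $\Gamma_d(M)$ near $H_0$, and use the resulting bound on the dimension of the fiber of $\Projan\cR(M)$ over $(H_0,0,0)$ (Theorem A1 of \cite{KT1}) to promote the generic containment $(\pd G{a_i})_D\in\overline M$ furnished by Theorem 4.4 to a containment at $H_0$. This is the route the paper takes; your intermediate module $M'$ and the additivity argument are a tidy way to organize the bookkeeping, but as written they stop one step short of the asserted conclusion.
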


\begin{proof} The condition on $U$ implies that ${\overline {J(X\cap H)}}_D)$ is the restriction of $N$ to the fiber. Essentially since $N$ is independent of $H$, $N$ has no polar variety of the same codimension as $U$. The multiplicity polar theorem then implies 
$e(J(X\cap H)_D,{\overline {J(X\cap H)}}_D)$ is upper semicontinuous on $U$.

Suppose we are at $H$ which gives the minimal value of the multiplicity. Since the value of the multiplicity cannot go down, it must be constant, which implies that the polar variety of $M$ of the same dimension as $U$ must be empty. The emptiness of the polar variety puts restrictions on the size of the fiber of $Proj\cR(M)$. Now we know that generically the $\pd G{a_{i}}$ are in $\overline M$; coupling this with the bound on the dimension of the fiber of $Proj\cR(M)$, by Theorem A1 of \cite{KT1}, it follows that the $\pd G{a_{i}}$ are in the integral closure of $M$ at $H$ as well, which finishes the proof.\end{proof}

 \end{document}